\numberwithin{equation}{section}
\newtheorem{theorem}{Theorem}[section]
\newtheorem{corollary}[theorem]{Corollary}
\newtheorem{proposition}[theorem]{Proposition}
\newtheorem{lemma}[theorem]{Lemma}
\newtheorem{definition}[theorem]{Definition}
\theoremstyle{remark}
\newtheorem{remark}[theorem]{Remark}
\begin{document}

\thanks{}

\author{F. Micena}
\address{Departamento de Matem\'atica,
  IM-UFAL Macei\'{o}-AL, Brazil.}
\email{fpmicena@gmail.com}


\renewcommand{\subjclassname}{\textup{2000} Mathematics Subject Classification}

\date{\today}

\setcounter{tocdepth}{2}

\title{New Derived from Anosov Diffeomorphisms   with pathological center foliation. }
\maketitle
\begin{abstract}
In this paper we focused our study on Derived From Anosov diffeomorphisms (DA  diffeomorphisms ) of the torus $\mathbb{T}^3,$ it is, an absolute partially hyperbolic diffeomorphism on $\mathbb{T}^3$ homotopic to an
Anosov linear automorphism of the $\mathbb{T}^3.$ We can prove that if $f: \mathbb{T}^3 \rightarrow \mathbb{T}^3 $ is a volume preserving DA diffeomorphism homotopic to linear Anosov $A,$ such that the center Lyapunov exponent satisfies $\lambda^c_f(x) > \lambda^c_A > 0,$ with $x $ belongs to a positive volume set,  then the center foliation of $f$ is non absolutely continuous.  We construct a new open  class $U$ of non Anosov and volume preserving DA diffeomorphisms, satisfying the property $\lambda^c_f(x) > \lambda^c_A > 0$ for $m-$almost everywhere $x \in \mathbb{T}^3.$ Particularly for every $f \in U,$ the center foliation of $f$ is non absolutely continuous.
\end{abstract}

\section{Introduction and Statements of the Results}

Let $M$ be a $C^{\infty}$ riemannian closed (compact, connected and boundaryless)  manifold. A $C^1-$diffeomorphism $f: M \rightarrow M$ is called a partially hyperbolic diffeomorphism if the tangent bundle $TM$ admits a $Df$ invariant tangent decomposition $TM =  E^s \oplus E^c \oplus E^u$ such that all unitary vectors $v^{\sigma} \in E^{\sigma }_x, \sigma \in \{s,c,u\}$ for every $x \in M$ satisfy:

$$ ||D_x f v^s || < ||D_x f v^c || < ||D_x f v^u ||,$$

moreover

$$||D_x f v^s || < 1 \;\mbox{and}\; ||D_x f v^u || > 1 $$

We say that a partially hyperbolic diffeomorphism $f$ is an absolute partially hyperbolic diffeomorphism if

$$||D_x f v^s || < ||D_y f v^c || < ||D_z f v^u ||$$

for every $x,y,z \in M$ and $v^s, v^c, v^u$ are unitary vectors in $E^s_x, E^c_y, E^u_z$ respectively.

From now, in this paper, when we require partial hyperbolicity, we mean absolute partially hyperbolicity and all diffeomorphisms considered are at least $C^1.$ We go to denote by $PH_m(\mathbb{T}^3)$ the set of all partially hyperbolic diffeomorphisms which preserve the volume form $m.$

In partially hyperbolic context it is well known that the sub-bundles $E^s, E^u,$ respectively the stable and ubstable sub-bunbles are uniquely integrable to invariant foliations $\mathcal{F}^s, \mathcal{F}^u$ respectively (see \cite{HPS}). The sub-bundle $E^c $ is not necessarily uniquely integrable to a invariant foliation $\mathcal{F}^c,$ in fact, in \cite{HHU2} the authors provide examples of (non absolute) partially hyperbolic diffeomorphisms, such that $E^c $ is not  uniquely integrable.

\begin{definition} A partially hyperbolic diffeomorphism $f: M \rightarrow M$ is called dynamically coherent if $E^{cs} := E^c \oplus E^s$ and $E^{cu} := E^c \oplus E^u$  are uniquely integrable to invariant foliations $\mathcal{F}^{cs}$ and  $\mathcal{F}^{cu},$ respectively the center stable and center unstable foliations. Particularly $E^c$ is uniquely integrable to the center foliaiton $\mathcal{F}^{c},$ which is obtained by the intersection $\mathcal{F}^{cs} \cap \mathcal{F}^{cu}.$
\end{definition}

When $M = \mathbb{T}^3,$ Brin-Burago- Ivanov, in \cite{BBI}, shown that:

\begin{theorem}
\cite{BBI} All partially hyperbolic diffeomorphisms $f: \mathbb{T}^3 \rightarrow \mathbb{T}^3 $ are dynamically coherent.
\end{theorem}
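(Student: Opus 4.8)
The plan is to deduce unique integrability of the center‑stable and center‑unstable bundles from the large‑scale geometry of the strong foliations in the universal cover. First I would lift $f$ to $\tilde f : \mathbb{R}^3 \to \mathbb{R}^3$ and recall two inputs: by Hirsch–Pugh–Shub \cite{HPS} the bundles $E^s$ and $E^u$ integrate uniquely to $f$-invariant foliations $\mathcal{F}^s,\mathcal{F}^u$; and, by Burago–Ivanov's analysis of partially hyperbolic diffeomorphisms of $3$-manifolds with abelian fundamental group, $f$ is homotopic to a linear automorphism $A$ of $\mathbb{T}^3$ that is itself (absolutely) partially hyperbolic, with real eigenvalues $|\lambda^s|<|\lambda^c|<|\lambda^u|$, $|\lambda^s|<1<|\lambda^u|$, and eigenlines $E^s_A,E^c_A,E^u_A$ coarsely aligned with $E^s,E^c,E^u$ (note $A$ need not be Anosov: $|\lambda^c|$ may equal $1$). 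The key geometric lemma I would then establish is that $\tilde{\mathcal{F}}^s$ and $\tilde{\mathcal{F}}^u$ are \emph{quasi-isometrically embedded} in $\mathbb{R}^3$, uniformly — the intrinsic leaf metric is comparable to the Euclidean metric. This is proved by pushing a leaf forward with $\tilde f^n$ and comparing its exponential length growth with that of the corresponding line of the linear model $A$, using exactly the uniform spectral gaps furnished by \emph{absolute} partial hyperbolicity; this is precisely where absoluteness, rather than mere pointwise partial hyperbolicity, is essential.

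Next I would bring in Burago–Ivanov's branching foliations. Although $E^{cs}=E^c\oplus E^s$ and $E^{cu}=E^c\oplus E^u$ need not be uniquely integrable a priori, each admits an $f$-invariant \emph{branching foliation} — a family of complete $C^1$ surfaces tangent to the bundle, one through every point, pairwise non-topologically-crossing — obtained as a Hausdorff limit of genuine foliations tangent to distributions $C^0$-close to $E^{cs}$ (resp.\ $E^{cu}$); such approximating foliations exist because a smooth plane field $C^0$-close to $E^{cs}$ can be made integrable in dimension $3$ by a small perturbation. Call these $\mathcal{W}^{cs},\mathcal{W}^{cu}$. Lifting a leaf of $\mathcal{W}^{cs}$ to $\mathbb{R}^3$, a Novikov/Palmeira-type argument (the leaves of a foliation of $\mathbb{R}^3$ tangent to a continuous plane field, permuted by the deck group, are properly embedded planes) shows it is a properly embedded plane; combined with the quasi-isometry of $\tilde{\mathcal{F}}^s$, each such plane lies in a bounded Hausdorff neighborhood of an affine translate of the linear plane $E^{cs}_A=E^s_A\oplus E^c_A$, and symmetrically for $cu$. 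In particular the lifts of $\mathcal{W}^{cs}$ and $\mathcal{F}^u$, and of $\mathcal{W}^{cu}$ and $\mathcal{F}^s$, have a global product structure in $\mathbb{R}^3$.

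It then remains to show that the branching foliations do not in fact branch. The point is that, through the coarse data above, $\tilde{\mathcal{W}}^{cs}$ is leaf-conjugate at large scale to the \emph{linear} center-stable foliation of $A$: each lifted leaf is pinned to a unique affine $E^{cs}_A$-plane, and each leaf of $\tilde{\mathcal{F}}^u$ — quasi-isometric and coarsely transverse to those planes — meets each leaf of $\tilde{\mathcal{W}}^{cs}$ in exactly one point. If two leaves $L_1\ne L_2$ of $\tilde{\mathcal{W}}^{cs}$ passed through a common point $p$, they would be non-crossing graphs pinned to the same affine plane, bounding a bounded ``plank''; using $f$-equivariance together with the uniform transverse expansion recorded by $\tilde{\mathcal{F}}^u$ one forces this plank to be degenerate, i.e.\ $L_1=L_2$. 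Hence $\mathcal{W}^{cs}$ is a genuine $f$-invariant foliation $\mathcal{F}^{cs}$ tangent to $E^{cs}$; unique integrability of $E^{cs}$ then follows since any surface tangent to $E^{cs}$ is $\mathcal{F}^s$-saturated, hence locally a leaf of $\mathcal{F}^{cs}$, hence globally one by the non-crossing property. Running the same argument for $E^{cu}$ yields $\mathcal{F}^{cu}$, and then $\mathcal{F}^c:=\mathcal{F}^{cs}\cap\mathcal{F}^{cu}$ uniquely integrates $E^c$, so $f$ is dynamically coherent.

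The main obstacle is the bundle of statements making up the last two steps: the uniform quasi-isometry of $\tilde{\mathcal{F}}^s,\tilde{\mathcal{F}}^u$, the planarity and coarse affineness of the branching leaves, and the equivariant argument that kills the plank. Each of these is a genuinely global statement and genuinely uses absoluteness of the partial hyperbolicity and the structure of the linear model $A$; in the merely pointwise (non-absolute) setting the branching leaves may spiral and the branching can persist, consistent with the non-dynamically-coherent examples of \cite{HHU2}.
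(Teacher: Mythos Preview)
The paper does not prove this statement: it is quoted verbatim from \cite{BBI} as background and no argument is given. There is therefore nothing in the paper to compare your proposal against. Your sketch is a reasonable outline in the spirit of Brin--Burago--Ivanov's original argument (quasi-isometry of the strong foliations in the universal cover, coarse comparison with the linear model, ruling out branching), and it correctly flags that absoluteness of the splitting is the crucial hypothesis; but for the purposes of this paper the theorem is simply an imported result, and a full reproduction of its proof is neither expected nor present.
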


Every diffeomorphism of the torus $\mathbb{T}^n$ induces an automorphism of the fundamental group and there exists a unique linear diffeomorphism $f_{\ast}$ which induces the same automorphism on $\pi_1(\mathbb{T}^n).$ The diffeomorphism $f_{\ast}$ is called linearization of $f.$ In this paper we study relations between the center Lyapunov exponent of $f$ and the center Lyapunov exponents of $f_{\ast}$ under absolute continuity of the center foliation of $f.$  The relations founded will allow construct a new open class of diffeomorphisms  in $PH_m(\mathbb{T}^3)$ which the center foliation is pathological, i.e,  non absolutely continuous.

\begin{definition} Let $f: \mathbb{T}^3 \rightarrow \mathbb{T}^3 $ be a partially hyperbolic diffeomorphism, $f$ is called a Derived from Anosov (DA) diffeomorphism if its linearization $f_{\ast}: \mathbb{T}^3 \rightarrow \mathbb{T}^3 $ is a linear Anosov automorphism.
\end{definition}

By \cite{H}, given $f: \mathbb{T}^3 \rightarrow \mathbb{T}^3 $ be a partially hyperbolic diffeomorphism, then $f_{\ast}$ is also partially hyperbolic, moreover there is a homeomorphism $h: \mathbb{T}^3 \rightarrow \mathbb{T}^3 $ that carries center leaves of $f_{\ast}$ to corresponding center leaves of
$f,$ it is,

$$ h(\mathcal{F}^c_{f_{\ast}}(x)) = \mathcal{F}^c_f(h(x)),$$

where  $\mathcal{F}^c_g(y)$ is the center leaf of $g$ through $y.$

Particularly the center foliation a DA diffeomorphism of the $\mathbb{T}^3$ is non compact.

\subsection{Lyapunov Exponents}

Lyapunov exponents are important constants and measure the asymptotic behavior of dynamics in tangent space level. Let $f: M \rightarrow M$ be a measure preserving diffeomorphism. Then by the Oseledec theorem, for almost every $x \in M $ and any $v \in T_x M $ the following limit exists:

$$\lim_{n \rightarrow +\infty} \frac{1}{n} \log ||Df^n(x) \cdot v ||$$

and it is equal to one of the Lyapunov exponents of $f.$ For a volume preserving partially hyperbolic of $\mathbb{T}^3,$ which is the main object of the study in this paper, we get a full Lebesgue measure subset  $\mathcal{R}$   such that for each $x \in \mathcal{R}:$

$$\lim_{n \rightarrow +\infty} \frac{1}{n} \log ||Df^n(x) \cdot v^{\sigma} || = \lambda^{\sigma}_f(x),$$

where $\sigma \in \{s,c,u\}$ and $v^{\sigma} \in E^{\sigma}\setminus \{0\}.$

A result of Hammerlindl-Ures states an important dichotomy between ergodicity and the center Lyapunov exponent equal to zero.

\begin{theorem} \cite{HU} \label{teoHU} Suppose that $f: \mathbb{T}^3 \rightarrow \mathbb{T}^3 $ is a conservative $C^2-$DA diffeomorphism. If $f$ is not ergodic, then  the center Lyapunov exponent is zero almost everywhere.
\end{theorem}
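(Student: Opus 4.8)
The plan is to prove the contrapositive: if $\lambda^c_f$ is nonzero on a set of positive volume, then $f$ is ergodic. Since $f$ and $f^{-1}$ are ergodic together and $\lambda^c_{f^{-1}}=-\lambda^c_f$, it is enough to treat the case in which the $f$-invariant set $P:=\{x\in\mathbb{T}^3:\lambda^c_f(x)>0\}$ has positive volume, and to conclude from this that $f$ is ergodic. By Brin--Burago--Ivanov, $f$ is dynamically coherent, so $E^{cu}=E^c\oplus E^u$ integrates uniquely to a continuous foliation $\mathcal{F}^{cu}$, while the strong foliations $\mathcal{F}^s$ and $\mathcal{F}^u$ have absolutely continuous holonomies. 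On $P$ all Oseledets exponents along $E^{cu}$ are positive, so for $m$-a.e.\ $x\in P$ the Pesin unstable manifold of $x$ is two-dimensional, tangent to $E^{cu}_x$, hence an open sub-disk of the leaf $\mathcal{F}^{cu}(x)$; and by Pesin's absolute continuity theorem the lamination of $P$ by these unstable manifolds is absolutely continuous.

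Next I would run a nonuniform Hopf argument with the two transverse, complementary, absolutely continuous laminations, namely $\mathcal{F}^s$ and the Pesin unstable lamination on $P$. For a continuous $\varphi:\mathbb{T}^3\to\mathbb{R}$, the forward and backward Birkhoff averages $\varphi^{+}$ and $\varphi^{-}$ exist $m$-a.e.\ and coincide $m$-a.e.; by uniform continuity, $\varphi^{+}$ is constant along strong stable leaves and $\varphi^{-}$ is constant along the Pesin unstable manifolds. Since $\mathcal{F}^s$ is one-dimensional and the Pesin unstable manifolds are two-dimensional, transverse and complementary, a Fubini argument using absolute continuity of both laminations shows that $\varphi^{+}$ equals $m$-a.e.\ a constant $c_\varphi$ on a fixed positive-measure \emph{open} set $O$ (a box obtained by saturating, along $\mathcal{F}^s$, a Pesin unstable disk based at a density point of a Pesin block on which these disks have uniform size); choosing a countable dense family of continuous functions, one can take the same $O$ for all of them.

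To finish I would invoke the minimality of the strong unstable foliation $\mathcal{F}^u$ of a DA diffeomorphism of $\mathbb{T}^3$ --- a consequence of the leaf conjugacy between $f$ and its linear Anosov linearization $A$, whose strong unstable foliation is a linear foliation by dense lines. Because $\varphi^{+}=\varphi^{-}$ is also $m$-a.e.\ constant along $\mathcal{F}^u$-leaves, and every $\mathcal{F}^u$-leaf meets the open set $O$, the value $c_\varphi$ propagates along $\mathcal{F}^u$-leaves (using absolute continuity of $\mathcal{F}^u$) to $m$-a.e.\ point of $\mathbb{T}^3$; hence $\varphi^{+}\equiv c_\varphi$ $m$-a.e.\ for every continuous $\varphi$, so $f$ is ergodic. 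The step I expect to be the main obstacle is twofold: the minimality of $\mathcal{F}^u$ for DA maps on $\mathbb{T}^3$, which is the one place the DA hypothesis genuinely enters and requires the coarse geometry of the center-stable/center-unstable foliations and the leaf conjugacy to the linear model; and the Fubini estimate in the middle paragraph, which has to be carried out block by block, since $P$ need not contain entire $\mathcal{F}^{cu}$-leaves and $\lambda^c_f$ is not bounded away from $0$ on all of $P$, so one needs uniform control, on each Pesin block, of the size of the unstable manifolds and of the Jacobians of the holonomy maps.
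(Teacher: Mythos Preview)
The paper does not prove this statement. Theorem~\ref{teoHU} is quoted from \cite{HU} (Hammerlindl--Ures) as background, with no argument supplied; so there is no proof in the present paper against which to compare your proposal.

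That said, your outline is essentially the strategy of the original \cite{HU} argument: a nonuniform Hopf argument in which, on the set where $\lambda^c_f>0$, the Pesin unstable manifolds are two-dimensional open pieces of $\mathcal{F}^{cu}$-leaves with absolutely continuous holonomy, paired with the (uniform) strong stable foliation, and then global ergodicity is obtained by propagating along a minimal strong foliation. The two items you single out as obstacles are precisely the substantive ingredients of \cite{HU}: the minimality of the strong foliations for DA diffeomorphisms of $\mathbb{T}^3$, proved there from the quasi-isometric/leaf-conjugacy structure, and the careful Pesin-block Fubini estimate. One small caution on your reduction: replacing $f$ by $f^{-1}$ also swaps the roles of $\mathcal{F}^s$ and $\mathcal{F}^u$ and flips the sign of $\lambda^c_A$; in \cite{HU} the case split and the choice of which strong foliation's minimality to use are organized around the sign of $\lambda^c_A$ rather than the sign of $\lambda^c_f$ on $P$, so when writing this out you should keep track of which strong foliation you are actually invoking after the symmetry reduction.
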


\subsection{Absolute Continuity} It is known that the foliations $\mathcal{F}^s$ and $\mathcal{F}^u$ of a conservative $C^{1+\alpha}$ diffeomorphism $f: M \rightarrow M$ satisfies a property called absolute continuity. Absolute continuity is a fundamental tool in the Hopf argument in the proof of ergodicity of $C^{1+\alpha}$ conservative Anosov diffeomorphisms. Roughly speaking a foliation $\mathcal{F}$ of $M$ is absolutely continuous if satisfies: Given a set $Z \subset M,$ such that $Z$ intersects the leaf $\mathcal{F}(x)$ on a zero measure  set of the leaf, with $x$ along a full Lebesgue set of $M,$ then $Z$ is a zero measure set of $M.$
More precisely we write:

\begin{definition} We say that a foliation $\mathcal{F}$ of $M$ is absolutely continuous if given any $\mathcal{F}-$foliated box $B$ and a Lebesgue measurable set $Z,$ such that $Leb_{\mathcal{F}(x)\cap B} (\mathcal{F}(x)\cap Z) = 0,$ for $m_B-$ almost everywhere $x \in B,$ then $m_B(Z) = 0.$ Here $m_B $ denotes the Lebesgue measure on $B$ and $Leb_{\mathcal{F}(x)\cap B} $ is the Lebesgue measure of the submanifold $\mathcal{F}(x)$ restricted to $B.$

$$Leb_{\mathcal{F}(x) \cap B  } ((\mathcal{F}(x) \cap B) \cap Z ) = 0, \; m_B- a.e. x \in B  \Rightarrow m_B(Z) = 0. $$
\end{definition}

It means that if $P$ is such that $m_B(P) > 0,$ then there are a measurable subset $B' \subset B,$ such that $m_B(B') > 0$ and  $Leb_{\mathcal{F}(x)\cap B}(\mathcal{F}(x) \cap Z) > 0$ for every $x \in B'.$

The study of absolute continuity of the center foliation started with Ma\~{n}\'{e},  that noted a interesting relation between absolute continuity and the center Lyapunov exponent.
The Ma\~{n}\'{e}'s argument can be explained as the following theorem:

\begin{theorem} Let $f: M \rightarrow M$ be a  partially hyperbolic, dynamcally coherent such that $\dim(E^c) = 1$ and $\mathcal{F}^c$ is a compact foliation. Suppose $f$ preserves a volume form $m$ on $M,$  and the set of $x \in M$ such that   $\lambda^c_f(x) > 0$ has positive volume.  Then $\mathcal{F}^c$ is non absolutely continuous.
\end{theorem}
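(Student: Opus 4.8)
The plan is to argue by contradiction: assuming $\mathcal{F}^c$ is absolutely continuous, I would disintegrate the volume $m$ along the center leaves, track how the conditional densities evolve under $f$ through the one-dimensional leafwise Jacobian, and then invoke Poincar\'e recurrence to contradict the positivity of $\lambda^c_f$ on a positive-volume set.

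First I would set up the disintegration. Since $\dim E^c = 1$ and every center leaf is a compact circle, the partition of $M$ into center leaves is a measurable partition, so by Rokhlin's disintegration theorem $m = \int m_\ell\,d\hat m(\ell)$ with conditional probabilities $m_\ell$ living on the leaves $\ell$. Absolute continuity of $\mathcal{F}^c$ (in the sense of the definition above) forces $m_\ell \ll \mathrm{Leb}_\ell$ for $\hat m$-a.e.\ $\ell$: collecting the $\mathrm{Leb}_\ell$-null supports of the singular parts of the $m_\ell$ into one set $Z$ with $\mathrm{Leb}_\ell(Z\cap\ell)=0$ for every $\ell$, absolute continuity gives $m(Z)=0$, which forces those singular parts to vanish. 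Write $m_\ell = \rho_\ell\,\mathrm{Leb}_\ell$ with $\int_\ell\rho_\ell\,d\mathrm{Leb}_\ell=1$, and put $R(x):=\rho_{\mathcal{F}^c(x)}(x)$. Then $\int_M R\,dm = 1$, so $R$ is finite $m$-a.e., and $m(\{R=0\}) = \int_\ell\int_{\{\rho_\ell=0\}}\rho_\ell\,d\mathrm{Leb}_\ell\,d\hat m = 0$, so $R$ is also positive $m$-a.e.

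Next I would compute how $R$ transforms. Since $f$ preserves $m$ and carries center leaves to center leaves, it induces an $\hat m$-preserving map on the leaf space, and by essential uniqueness of the disintegration $f_* m_\ell = m_{f(\ell)}$ for $\hat m$-a.e.\ $\ell$. Together with the change of variables along a leaf — $f$ multiplies arc length on $\ell$ by exactly $\|Df|_{E^c}\|$ — this gives $\rho_{f(\ell)}(f(y)) = \rho_\ell(y)/\|Df(y)|_{E^c}\|$, and iterating (using that $E^c$ is one-dimensional and $Df$-invariant, so the leafwise cocycle is genuinely multiplicative) we obtain
$$R(f^n(y)) = \frac{R(y)}{\|Df^n(y)|_{E^c}\|}\qquad\text{for } m\text{-a.e.\ } y \text{ and every } n\in\mathbb{N}.$$
Now let $P = \{x:\lambda^c_f(x)>0\}$, which is $f$-invariant with $m(P)>0$ by hypothesis. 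By Oseledets, $\tfrac1n\log\|Df^n(y)|_{E^c}\|\to\lambda^c_f(y)$ for a.e.\ $y$, so $\|Df^n(y)|_{E^c}\|\to\infty$ and hence $R(f^n(y))\to 0$ for a.e.\ $y\in P$ (here $0<R(y)<\infty$). On the other hand, choose $\varepsilon>0$ small enough that $A:=P\cap\{R\ge\varepsilon\}$ has $m(A)>0$, which is possible since $R>0$ a.e.\ and $m(P)>0$. Poincar\'e recurrence applied to $(f,m)$ and $A$ yields, for $m$-a.e.\ $y\in A$, infinitely many $n$ with $f^n(y)\in A$; for those $n$, $R(f^n(y))\ge\varepsilon$, contradicting $R(f^n(y))\to 0$. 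Hence $\mathcal{F}^c$ cannot be absolutely continuous.

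The step I expect to require the most care is the equivariant disintegration: one must verify that the center-leaf partition is genuinely measurable (this is exactly where compactness of the leaves enters) and that the conditional measures transform \emph{exactly} by the leafwise Jacobian $\|Df|_{E^c}\|$ rather than only up to a bounded multiplicative error, and that the resulting identities — which hold a priori only $\hat m$-a.e.\ and $\mathrm{Leb}_\ell$-a.e.\ — can be arranged to hold simultaneously for all $n$ on a single full-measure set. Everything else (Oseledets, Poincar\'e recurrence, and the bound $\int_M R\,dm = 1$) is routine.
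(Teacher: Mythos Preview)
Your argument is correct and takes a genuinely different route from the paper's. The paper's proof is elementary and geometric: it stratifies $P$ into sets $\Lambda_{k,l,n}=\{x\in P:\|Df^j(x)|E^c\|\ge e^{j/k}\ \forall j\ge l,\ |\mathcal{F}^c(x)|<n\}$, picks one of positive measure, uses absolute continuity only to find a single leaf meeting $\Lambda_{k_0,l_0,n_0}$ in positive arc length, and then observes that iterating such a leaf stretches that arc exponentially, forcing $|\mathcal{F}^c(f^j(x))|\to\infty$, in contradiction with Poincar\'e recurrence into a set on which leaf lengths are bounded by $n_0$. No Rokhlin disintegration is used; compactness enters simply as ``every leaf has finite length.'' Your approach instead packages the contradiction into the conditional density $R$ and its exact transformation law under $f$; this is more measure-theoretic and arguably more portable (it does not rely on a scalar ``leaf length''), at the cost of invoking Rokhlin, measurability of the leaf partition, and the equivariance of the disintegration. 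One small slip that does not affect the argument: the identity $\int_M R\,dm=1$ is not correct as stated---disintegrating gives $\int_{\hat M}\int_\ell \rho_\ell^2\,d\mathrm{Leb}_\ell\,d\hat m$, not $1$---but you do not need it: finiteness of $R$ $m$-a.e.\ follows directly from $\rho_\ell\in L^1(\mathrm{Leb}_\ell)$ leafwise, and your computation of $m(\{R=0\})=0$ is already correct.
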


\begin{proof} Denote by $P$ the set of $x \in M$ such that   $\lambda^c_f(x) > 0.$ Consider $\Lambda_{k,l, n } =\{ x \in P | \; ||Df^j(x)|E^c|| \geq e^{\frac{j}{k}},\; \mbox{for every} \;  j \geq l , \; \mbox{and} \; |\mathcal{F}^c(x)| < n   \} ,$ here $|\mathcal{F}^c(x)|$ denotes the size of the center leaf $\mathcal{F}^c(x)$ through $x. $

We have $P = \displaystyle\bigcup_{k,l, n \in \mathbb{N}} \Lambda_{k,l, n }, $ in particular there are $k_0, l_0, n_0$ such that $m(\Lambda_{k_0,l_0, n_0 }) > 0.$  Supposing  that $\mathcal{F^c}$ is an absolutely continuous foliation, there is a center leaf $\mathcal{F}^c(x), $ such that it intersects  $\Lambda_{k_0,l_0, n_0 }$ on a positive Lebesgue measure set of the leaf. By Poincar\'{e}-recurrence Theorem, the  point $x $ can be chosen a recurrent point, particularly there is a subsequence $n_k$ such that $f^{n_k}(x) \in \Lambda_{k_0,l_0, n_0 }, $ and it implies that the size $|\mathcal{F}^c({f^{n_k}(x)})| < n_0.$

On the other hand, we denote

$$\alpha = Leb_{\mathcal{F}^c(x)}(\mathcal{F}^c(x) \cap \Lambda_{k_0,l_0, n_0 } ),$$

so, if $j \geq k_0$ we have $|\mathcal{F}(f^j(x))| \geq \alpha\cdot e^{\frac{j}{k_0}} \rightarrow +\infty$ when $j \rightarrow +\infty, $ and it contradicts $|\mathcal{F}^c({f^{n_k}(x)})| < n_0$ for a subsequence $n_k.$ Consequentely all one dimensional compact and absolutely continuous center foliation implies that $\lambda^c_f(x) = 0,$ for $m-$ almost everywhere $x \in M.$
\end{proof}

\begin{remark} Katok exhibits an example of a volume preserving partially hyperbolic diffeomorphism $f: \mathbb{T}^3 \rightarrow \mathbb{T}^3 $ such that $\mathcal{F}^c$ is compact, non absolutely continuous and $\lambda^c_f(x)=0$ for $m-$ a.e. $x \in \mathbb{T}^3 . $ See \cite{HaPe} and citations therein.
\end{remark}

Ruelle-Wilkinson and Pesin-Hirayama generalized the Ma\~{n}\'{e} argument, we can state these results in a unique theorem as following:

\begin{theorem} \cite{HP}, \cite{RW} Consider a dynamically coherent partially hyperbolic
diffeomorphism $f$ whose center leaves are fibers of a (continuous) fiber
bundle. Assume that the all center Lyapunov exponents are negative (or positive)
then the conditional measures of $\mu$ on the leaves of the center foliation are atomic
with $p, p \geq 1,$ atoms of equal weight on each leaf.

\end{theorem}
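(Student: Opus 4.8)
The plan is to reduce everything to a statement about the Rokhlin disintegration of $\mu$ along the partition into center leaves. This partition is genuinely measurable precisely because the center leaves are the fibers of a continuous fiber bundle $\pi\colon M\to B$, so we may write $\mu=\int_B\mu_b\,d\bar\mu(b)$ with $\bar\mu=\pi_*\mu$ and $\mu_b$ a Borel probability on the fiber $\pi^{-1}(b)=\mathcal F^c(b)$. Two preliminary reductions: first, replacing $f$ by $f^{-1}$ if necessary (which preserves the center foliation and the bundle structure) we may assume all center Lyapunov exponents are negative; second, since "all center exponents negative" is a pointwise $\mu$-a.e. condition, it passes to a.e. ergodic component, so by the ergodic decomposition we may assume $\mu$ is ergodic. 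The key structural fact is equivariance: $f$ carries fibers bijectively onto fibers and $\mu$ is invariant, so by essential uniqueness of the disintegration $f_*\mu_b=\mu_{\bar f b}$ for $\bar\mu$-a.e. $b$, where $\bar f\colon B\to B$ is the factor map.

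Next I would introduce the "atom-weight" function $m(x):=\mu_{\mathcal F^c(x)}(\{x\})$. From the equivariance one checks $m\circ f=m$ $\mu$-a.e., so by ergodicity $m\equiv c$ for a constant $c\ge 0$. If $c>0$, then for $\bar\mu$-a.e. $b$ the measure $\mu_b$ assigns mass exactly $c$ to $\mu_b$-a.e.\ point; hence $\mu_b$ is carried by its set of atoms of mass $c$, that set is finite and nonempty, its cardinality is $p:=1/c\in\mathbb N$, and $\mu_b=\frac1p\sum_{i=1}^p\delta_{x_i(b)}$. This is exactly the claimed conclusion (atomic, $p\ge1$ atoms of equal weight on each leaf), so the whole theorem comes down to ruling out the possibility $c=0$, i.e.\ to showing that the conditional measures cannot be non-atomic.

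To rule out $c=0$ I would argue by contradiction using the negative center exponents. Pesin theory (applied to the contracting bundle $E^{cs}=E^c\oplus E^s$, using that $\mathcal F^c(x)\subset\mathcal F^{cs}(x)$) gives a positive-measure block $\Lambda$ and constants $r_0>0$, $\beta\ge1$, $\lambda>0$ such that for every $x\in\Lambda$ and every $y$ in the leafwise ball $B^c(x,r_0)$ one has $d^c(f^nx,f^ny)\le\beta e^{-\lambda n}d^c(x,y)$ for all $n\ge0$; in particular $f^n\big(B^c(x,r_0)\big)\subset B^c\big(f^nx,\beta e^{-\lambda n}r_0\big)$. Since a.e.\ point lies in the support of its own conditional, $F(x):=\mu_{\mathcal F^c(x)}(B^c(x,r_0))>0$ a.e., so $\Lambda':=\{x\in\Lambda: F(x)\ge t_0\}$ has positive measure for a suitable $t_0>0$. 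Using $f^n_*\mu_{\mathcal F^c(x)}=\mu_{\mathcal F^c(f^nx)}$ we get, for all $x\in\Lambda'$ and $n\ge0$,
$$\mu_{\mathcal F^c(f^nx)}\big(B^c(f^nx,\beta e^{-\lambda n}r_0)\big)\ge F(x)\ge t_0 .$$
Fixing $\delta>0$ and writing $\psi_\delta(y):=\mu_{\mathcal F^c(y)}(B^c(y,\delta))$, this says $\psi_\delta(f^nx)\ge t_0$ for all $n$ large enough that $\beta e^{-\lambda n}r_0<\delta$; hence the forward orbit of a.e.\ $x\in\Lambda'$ eventually stays in $\{\psi_\delta\ge t_0\}$. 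Poincar\'e recurrence applied to $\{\psi_\delta<t_0\}\cap\Lambda'$ then forces $\mu(\{\psi_\delta<t_0\}\cap\Lambda')=0$, i.e.\ $\Lambda'\subset\{\psi_\delta\ge t_0\}$ modulo $\mu$-null sets, and this holds for every $\delta>0$. Letting $\delta\downarrow0$ along a sequence, $\psi_\delta(y)\downarrow m(y)$, so $m\ge t_0$ on the positive-measure set $\Lambda'$, contradicting $m\equiv c=0$. Therefore $c>0$, and by the previous paragraph the proof is complete; the positive-exponent case follows by this argument applied to $f^{-1}$.

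I expect the main obstacle to lie in the supporting technical points rather than in the dynamical idea, which is simply that a set of fixed conditional mass, squeezed by the leafwise contraction into balls of radius tending to $0$, forces the conditional measures of arbitrarily small leafwise balls to stay bounded below on a set of positive measure — incompatible with non-atomicity. The delicate ingredients are: the existence, essential uniqueness and $f$-equivariance of the disintegration over the fiber bundle; the uniform leafwise Pesin estimates on a positive-measure block, which is where the hypothesis that the center exponents all have one sign is genuinely used (one needs $\mathcal F^c$ to lie inside the contracting center-stable manifolds with leafwise-uniform size and rate); and the elementary but necessary fact that $\mu$-a.e.\ point belongs to the support of its own conditional. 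Some regularity beyond $C^1$ (e.g.\ $C^{1+\alpha}$, as in \cite{HP}, \cite{RW}) is needed to invoke Pesin theory here.
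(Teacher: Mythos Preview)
The paper does not supply a proof of this theorem: it is stated as a background result, attributed to \cite{HP} and \cite{RW}, and the text moves on immediately to the Saghin--Xia theorem. So there is no ``paper's own proof'' to compare your proposal against.

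That said, your argument is essentially the Ruelle--Wilkinson proof and is sound. The structure --- disintegrate over the (measurable, because fiber-bundle) partition into center leaves, reduce to the ergodic case, show the atom-weight function $m(x)=\mu_{\mathcal F^c(x)}(\{x\})$ is $f$-invariant hence constant $c$, and then rule out $c=0$ by a Pesin-block contraction argument forcing a uniform lower bound on $\mu_{\mathcal F^c(y)}(B^c(y,\delta))$ for all small $\delta$ on a set of positive measure --- is exactly the mechanism in \cite{RW}. Your use of Poincar\'e recurrence to upgrade ``$\psi_\delta(f^nx)\ge t_0$ for all large $n$'' to ``$\psi_\delta\ge t_0$ on $\Lambda'$'' is correct. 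Two minor remarks: after reducing to ergodic components the integer $p=1/c$ is a priori only constant on each component, which is consistent with how the theorem is usually read; and, as you note, the Pesin step needs $C^{1+\alpha}$ regularity, which is the setting of \cite{HP} and \cite{RW} even though the paper's statement does not make this explicit.
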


In the non compact case Saghin-Xia in \cite{SX} shown that:

 \begin{theorem}\label{teoSX} Let $g \in Diff_m(\mathbb{T}^d)$ close to a linear Anosov automorphism $L:\mathbb{T}^d\rightarrow \mathbb{T}^d $ with $\lambda^c_L > 0.$ If

$$ \displaystyle \int_{\mathbb{T}^d} \log(||Dg| E^c_g||) dm > \lambda^c_L,$$

then the foliation $\mathcal{F}^c_g$ is non absolutely continuous.
\end{theorem}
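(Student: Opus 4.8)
The plan is to carry out Ma\~{n}\'{e}'s argument for a non-compact center foliation. In the theorem of Ma\~{n}\'{e} recalled above, compactness of the center leaves produced the contradiction, since a leaf cannot have unbounded length; because the center leaves of $g$ are non-compact, I will instead compare the growth of a center arc of $g$ with the uniform exponential growth (with rate $\lambda^c_L$) of the corresponding center line of the linearization $L$, the comparison being supplied by the semiconjugacy between $g$ and $L$.

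First I would produce a positive-volume set on which the center direction is uniformly expanding. Since $\dim E^c_g=1$ we have $\|Dg^n(x)|E^c_g\|=\prod_{j=0}^{n-1}\|Dg(g^jx)|E^c_g\|$, so, writing $\phi(x)=\log\|Dg(x)|E^c_g\|$, we get $\frac1n\log\|Dg^n(x)|E^c_g\|=\frac1n\sum_{j=0}^{n-1}\phi(g^jx)$, whose limit exists $m$-a.e. by Birkhoff's theorem, equals $\lambda^c_g(x)$, and satisfies $\int\lambda^c_g\,dm=\int\phi\,dm>\lambda^c_L$; hence $P=\{x:\lambda^c_g(x)>\lambda^c_L\}$ has positive volume (note that neither ergodicity nor any extra regularity of $g$ enters). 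Exactly as in Ma\~{n}\'{e}'s argument, $P\subseteq\bigcup_{k,l\in\mathbb{N}}\Lambda_{k,l}$ with
\[
\Lambda_{k,l}=\Big\{x\in P:\ \|Dg^j(x)|E^c_g\|\ge e^{j(\lambda^c_L+1/k)}\ \text{for all }j\ge l\Big\},
\]
and we fix $k_0,l_0$ with $m(\Lambda_{k_0,l_0})>0$. Assuming, for a contradiction, that $\mathcal{F}^c_g$ is absolutely continuous, the consequence of absolute continuity stated after its definition provides a center leaf meeting $\Lambda_{k_0,l_0}$ in a set of positive arc length; lifting it to a center leaf $\widetilde{\mathcal{L}}$ of a lift $\widetilde g$ of $g$ to $\mathbb{R}^d$, I may choose a compact sub-arc $I\subset\widetilde{\mathcal{L}}$ with endpoints $a,b$ such that $\beta:=|I\cap\widetilde\Lambda|>0$, where $\widetilde\Lambda$ is the lift of $\Lambda_{k_0,l_0}$ and $|\cdot|$ is length along center leaves (a positive-length subset of a line is not covered by countably many null compact subintervals). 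Bounding the length of $\widetilde g^{\,j}(I)$ below by integrating the center Jacobian over $I\cap\widetilde\Lambda$ then gives, for all $j\ge l_0$,
\[
|\widetilde g^{\,j}(I)|\ \ge\ \beta\,e^{j(\lambda^c_L+1/k_0)}.
\]

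Next I would confront this with the linear model. By \cite{H} there is $\bar h\colon\mathbb{T}^d\to\mathbb{T}^d$, homotopic to the identity, with $\bar h\circ g=L\circ\bar h$, carrying center leaves of $g$ into center leaves of $L$, and whose lift $H\colon\mathbb{R}^d\to\mathbb{R}^d$ satisfies $\|H-\mathrm{id}\|\le C_0$ and $H\circ\widetilde g=L\circ H$; moreover the center leaves of $g$ are uniformly quasi-isometrically embedded in $\mathbb{R}^d$ (again \cite{H}; for $\mathbb{T}^3$ this also follows from \cite{BBI}), so there is a constant $K=K(g)$ with
\[
|\widetilde g^{\,j}(I)|\ \le\ K\,\|\widetilde g^{\,j}(a)-\widetilde g^{\,j}(b)\|+K,
\]
since $\widetilde g^{\,j}(I)$ is exactly the center-leaf arc from $\widetilde g^{\,j}(a)$ to $\widetilde g^{\,j}(b)$. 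As $H(\widetilde{\mathcal{L}})$ lies on an affine line parallel to the center eigendirection $v$ of $L$, the vector $H(a)-H(b)$ lies in the center eigenline of $L$, so, using $H\circ\widetilde g^{\,j}=L^j\circ H$,
\[
\|H(\widetilde g^{\,j}(a))-H(\widetilde g^{\,j}(b))\|=\|L^j(H(a)-H(b))\|=e^{j\lambda^c_L}\,\|H(a)-H(b)\|;
\]
since $\|H-\mathrm{id}\|\le C_0$, it follows that $\|\widetilde g^{\,j}(a)-\widetilde g^{\,j}(b)\|\le e^{j\lambda^c_L}\|H(a)-H(b)\|+2C_0$. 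Combining the three inequalities,
\[
\beta\,e^{j(\lambda^c_L+1/k_0)}\ \le\ K\,e^{j\lambda^c_L}\|H(a)-H(b)\|+2KC_0+K\qquad(j\ge l_0),
\]
and dividing through by $e^{j\lambda^c_L}$ the left-hand side tends to $+\infty$ while the right-hand side remains bounded: contradiction.

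The Ma\~{n}\'{e}-type bookkeeping is routine; the heart of the matter is the two structural inputs --- a bounded-distance semiconjugacy between $g$ and $L$ that respects center leaves, and the uniform quasi-isometry of the one-dimensional center leaves of $g$ --- both valid once $g$ is $C^1$-close to $L$ (this is where the Franks--Manning semiconjugacy and \cite{H}, respectively \cite{BBI} on $\mathbb{T}^3$, come in). It is precisely the quasi-isometry that prevents the long arc $\widetilde g^{\,j}(I)$ from folding back so as to keep $\widetilde g^{\,j}(a)$ and $\widetilde g^{\,j}(b)$ close; the only remaining point, that $K$ may be taken independent of $j$, is immediate because each $\widetilde g^{\,j}(I)$ is again a center-leaf arc, so I expect no genuine obstacle beyond carefully quoting those two results.
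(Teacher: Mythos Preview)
The paper does not give its own proof of Theorem~\ref{teoSX}; it is quoted from \cite{SX}. The natural comparison is with the paper's proof of Theorem~A, which generalizes it to arbitrary DA diffeomorphisms of $\mathbb{T}^3$. Your argument and that proof follow the same skeleton: locate a positive-volume set on which the center derivative exceeds $e^{j(\lambda^c_L+\delta)}$ for all large $j$, use absolute continuity to find a center arc $[a,b]^c$ meeting that set in positive length, get the lower bound $|\tilde g^{\,j}([\tilde a,\tilde b]^c)|\ge \beta\,e^{j(\lambda^c_L+\delta)}$, convert arc length to endpoint distance via quasi-isometry of $\mathcal{F}^c$, and contradict an upper bound of order $e^{j\lambda^c_L}$ coming from the linear model.

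The only genuine difference is how that upper bound is obtained. The paper avoids the semiconjugacy and instead combines two large-scale comparison lemmas (Lemmas~\ref{lemmaH} and~\ref{lemmaMT}, assembled inductively into Lemma~\ref{joint}) to get $\|\tilde f^n(\tilde a)-\tilde f^n(\tilde b)\|\le(1+\varepsilon)^{2n}e^{n\lambda^c_A}\|\tilde a-\tilde b\|$ once $\|\tilde a-\tilde b\|\ge M$. You instead invoke the Franks--Manning map $H$ directly and use that $H(\tilde a)-H(\tilde b)$ lies in the linear center eigenline. This is cleaner, but your justification is too casual: \cite{H} gives a leaf conjugacy on $\mathbb{T}^3$ going from $L$ to $f$, not the semiconjugacy $H$ with $L\circ H=H\circ g$ on $\mathbb{T}^d$, and it is not automatic that the Franks map sends $\mathcal{F}^c_g$ into the linear center lines. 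For $g$ close to $L$ this does hold (argue that $H$ respects the $cs$- and $cu$-foliations via growth rates, hence their intersection), but you should supply that step rather than cite \cite{H}. With that tightened, your route is a legitimate and slightly more direct variant of the paper's argument; the paper's lemma-based approach has the advantage of working verbatim in the general DA setting where $g$ is far from $A$ and no global conjugacy is available.
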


In the sense of theorem \ref{teoSX} we are able to prove its generalization for DA diffeomorphisms.

\begin{theorem} [Theorem A]
Let $f: \mathbb{T}^3 \rightarrow \mathbb{T}^3 $ a $m-$ preserving DA diffeomorphism with linearization $A: \mathbb{T}^3 \rightarrow \mathbb{T}^3. $ Suppose that there is a measurable set $P,$ with $m(P) > 0,$ such that  $\lambda^c_f(x) > 0$  for every $x \in P.$ If $\mathcal{F}^c_f$ is absolutely continuous, then $0 \leq \lambda^c_f(x) \leq \lambda^c_A, $ for $m-a.e.$ point $x \in \mathbb{T}^3.$
\end{theorem}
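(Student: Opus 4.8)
The plan is to run a Mañé-type argument adapted to the non-compact center foliation, using the semiconjugacy $h$ to the linearization $A$ to control the size of center leaves. Suppose for contradiction that $\mathcal{F}^c_f$ is absolutely continuous but there is a positive-volume set where $\lambda^c_f(x) > \lambda^c_A$. The key geometric input is that $h$ moves points a bounded distance (it is at bounded distance from the identity, being a semiconjugacy on the torus), so $h$ carries a center leaf of $A$ of length $\ell$ onto a center leaf of $f$ whose length is comparable to $\ell$ up to a uniform additive constant; more precisely, a subsegment of $\mathcal{F}^c_A$ of length $\ell$ maps into a subsegment of $\mathcal{F}^c_f$ of length at most $\ell + 2C$ where $C = \sup d(h(x),x)$. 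Since $A$ acts on its one-dimensional center bundle by the constant expansion $e^{\lambda^c_A}$, the center leaf segment $h([x_0,A^n x_0])$ of $f$ through $f^n(h(x_0))$ has length at most $e^{n\lambda^c_A}\cdot(\text{const}) + 2C$.

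Next I would set up the filtration. Let $P$ be the set where $\lambda^c_f > 0$; by hypothesis (after passing to a subset of positive measure) we may assume $\lambda^c_f(x) > \lambda^c_A + \delta$ for some $\delta > 0$ on $P$. Define, for $k,l \in \mathbb{N}$,
$$\Lambda_{k,l} = \{x \in P \;:\; \|Df^j(x)|E^c\| \geq e^{j(\lambda^c_A + \delta)} \text{ for all } j \geq l\},$$
so $P = \bigcup_{k,l}\Lambda_{k,l}$ and hence $m(\Lambda_{l_0}) > 0$ for some $l_0$. Now invoke absolute continuity of $\mathcal{F}^c_f$: there is a center leaf $\mathcal{F}^c_f(z)$ meeting $\Lambda_{l_0}$ in a set of positive one-dimensional Lebesgue measure $\alpha > 0$. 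By Poincaré recurrence we may take $z$ itself in $\Lambda_{l_0}$ and recurrent, with $f^{n_k}(z) \to z$. Tracking a segment of $\mathcal{F}^c_f(z)$ of length $\alpha$ inside $\Lambda_{l_0}$: its image under $f^{j}$ has center length at least $\alpha\, e^{j(\lambda^c_A + \delta)}$ for $j \geq l_0$ (here one uses that the lower exponent bound holds uniformly along this positive-measure piece, by a further Lusin/continuity refinement of $\Lambda$, or by passing to $f^{n_k}$ and using that the bound is inherited on the relevant iterates). But via $h$ and the paragraph above, the center leaf of $f$ through $f^{n_k}(z)$, being $h$ of a center leaf of $A$, has the property that any segment of it arising as the image of a bounded $A$-center segment has length controlled by $e^{n_k \lambda^c_A}$ up to additive constants — and comparing growth rates $e^{n_k(\lambda^c_A+\delta)}$ versus $e^{n_k\lambda^c_A}$ forces a contradiction as $k \to \infty$.

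I expect the main obstacle to be making precise the comparison between "length grown by the cocycle $\|Df^j|E^c\|$" and "length measured inside $h(\mathcal{F}^c_A)$". The subtlety is that $h$ is only a homeomorphism (not bi-Lipschitz), so one cannot directly transport lengths; the argument must instead use that $h$ and $h^{-1}$ move points a bounded distance and that center leaves of $f$ are quasi-isometrically embedded lines (a consequence of partial hyperbolicity on $\mathbb{T}^3$, via \cite{BBI}, \cite{H}), so that the \emph{global} length of the $f$-center segment joining $h(x_0)$ and $f^n(h(x_0)) = h(A^n x_0)$ is within a bounded factor of $d(h(x_0),h(A^n x_0)) \le d(x_0,A^nx_0) + 2C = e^{n\lambda^c_A}\ell + 2C$. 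Reconciling this upper bound on the \emph{endpoint-to-endpoint} center distance with the lower bound $\alpha e^{n(\lambda^c_A+\delta)}$ on the length of a sub-segment that stays inside $\Lambda_{l_0}$ requires knowing the sub-segment is itself roughly a geodesic in the leaf — which again follows from the quasi-isometry property — and this bookkeeping, together with the uniformization of the exponent bound along a positive-measure subset of a leaf, is where the real work lies. The conclusion $0 \le \lambda^c_f \le \lambda^c_A$ a.e.\ then follows, the lower bound $0 \le \lambda^c_f$ being automatic from Theorem~\ref{teoHU} (ergodic case gives a well-defined sign, and the DA/semiconjugacy structure forbids a negative center exponent on a positive-measure set by the symmetric argument applied to $f^{-1}$).
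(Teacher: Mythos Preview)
Your overall strategy---filter to a set with a uniform lower bound on the center exponent, use absolute continuity to produce a center segment carrying positive leaf-measure in that set, then play the resulting exponential lower bound on segment growth against an upper bound coming from the linearization $A$, via quasi-isometry of the center leaves---is exactly the paper's. The difference, and the gap, lies in how you extract the upper bound.

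You invoke $f^n(h(x_0)) = h(A^n x_0)$, but the map $h$ from \cite{H} is only a \emph{leaf} conjugacy: it carries $A$-center leaves to $f$-center leaves without intertwining the dynamics, so $f\circ h \neq h\circ A$ in general. The Franks semiconjugacy $H$ does intertwine, but in the direction $A\circ H = H\circ f$; via it one gets $\|\tilde f^n(\tilde a)-\tilde f^n(\tilde b)\| \le \|\tilde A^n(\tilde H\tilde a)-\tilde A^n(\tilde H\tilde b)\| + 2C$, and to bound the right-hand side by $e^{n\lambda^c_A}$-growth one still needs to know that $\tilde H\tilde a - \tilde H\tilde b$ has essentially no $E^u_A$-component---this is precisely the content of Lemma~\ref{lemmaMT} and is not free. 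The paper sidesteps the semiconjugacy entirely and works directly in the cover: Lemma~\ref{lemmaH} gives $\|\tilde f^k x - \tilde f^k y\| \asymp \|\tilde A^k x - \tilde A^k y\|$ for far-apart $x,y$; Lemma~\ref{lemmaMT} gives $\|\tilde A^n x - \tilde A^n y\| \asymp e^{n\lambda^c_A}\|x-y\|$ for far-apart $x,y$ on the same lifted $f$-center leaf; and Lemma~\ref{joint} iterates these (with $k=1$) into the clean bound $\|\tilde f^n(\tilde a)-\tilde f^n(\tilde b)\| \le (1+\varepsilon)^{2n}e^{n\lambda^c_A}\|\tilde a-\tilde b\|$, which is compared directly against the lower bound from the filtration. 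No Poincar\'e recurrence is used or needed---that step in your sketch is a vestige of Ma\~n\'e's compact-leaf argument and plays no role here.

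Two smaller points. The paper first proves $\lambda^c_A > 0$ as a separate preliminary claim (it is required for Lemma~\ref{joint} to apply), which your sketch omits. And your appeal to Theorem~\ref{teoHU} for the lower bound $\lambda^c_f \ge 0$ is misplaced: that theorem needs $C^2$ and yields the dichotomy ergodic/zero-center-exponent, not nonnegativity. Your parenthetical ``symmetric argument applied to $f^{-1}$'' is in fact the correct justification, and is exactly how the paper's subsequent corollary proceeds.
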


In the Anosov case, Gogolev in \cite{Go} describes completely the question of the absolute continuity of the center foliation of $C^{1 + \alpha}$ conservative Anosov diffeomorphisms of $\mathbb{T}^3.$

\begin{theorem}\cite{Go} Let $L$ be an automorphism of $\mathbb{T}^3$ with three distinct
Lyapunov exponents $\lambda^s_L < 0 < \lambda^c_L < \lambda^u_L.$ Let $U_L$ the open set (in the volume preserving setting) of all $C^{1+ \alpha}$ volume preserving Anosov diffeomorphism homotopic to $L.$ Let $f \in U_L,$ then $\mathcal{F}^c_f$ is absolutely continuous if, and only if, $\lambda^u(p) = \lambda^u(q)$ for every periodic points $p$ and $q.$
\end{theorem}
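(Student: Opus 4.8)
The plan is to translate absolute continuity of $\mathcal F^c_f$ into the solvability of a Liv\v{s}ic cohomological equation for the unstable cocycle $\log\|Df\,|E^u\|$, and then read off the two implications. \emph{Set-up.} Since $f$ is a partially hyperbolic Anosov diffeomorphism homotopic to $L$, structural stability produces a H\"older homeomorphism $h\colon\mathbb T^3\to\mathbb T^3$, homotopic to the identity, with $h\circ f=L\circ h$; by \cite{BBI} together with the leaf conjugacy of \cite{H}, $h$ carries $\mathcal F^s_f,\mathcal F^u_f,\mathcal F^c_f,\mathcal F^{cs}_f,\mathcal F^{cu}_f$ onto the corresponding linear foliations of $L$, so $\mathcal F^c_f=h^{-1}(\mathcal F^c_L)$ and $\mathcal F^{cu}_f$ is the two--dimensional unstable foliation of the Anosov map $f$. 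Because $f$ is $C^{1+\alpha}$ and volume preserving, $m$ is the SRB measure: its conditionals $m^{cu}_x$ along $\mathcal F^{cu}_f$ are equivalent to the leaf Lebesgue measure with positive continuous density, while $\mathcal F^{cu}_f,\mathcal F^u_f,\mathcal F^s_f$ are absolutely continuous. In an $\mathcal F^c_f$--foliated box the only other foliations that occur are the absolutely continuous $\mathcal F^u_f$ (inside $\mathcal F^{cu}_f$) and $\mathcal F^s_f$ (transverse to $\mathcal F^{cu}_f$); transitivity of disintegration therefore reduces the problem to a leafwise statement: $\mathcal F^c_f$ is absolutely continuous if and only if, for $m$--a.e.\ $x$, the conditionals of $m^{cu}_x$ along the one--dimensional weak subfoliation $\mathcal F^c_f\cap\mathcal F^{cu}_f(x)$ of the surface $\mathcal F^{cu}_f(x)$ are absolutely continuous with respect to arclength. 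On $\mathcal F^{cu}_f(x)$ one has a dominated splitting $E^c\oplus E^u$ into two expanding directions with strong subfoliation $\mathcal F^u_f$; the interaction of the complementary weak subfoliation $\mathcal F^c_f$ with the SRB--Lebesgue class is controlled by the holonomy of $\mathcal F^u_f$ inside the leaf, whose Jacobian cocycle over $f$ is built out of $\log\|Df\,|E^u\|$, equivalently by the regularity of $h$ along the leaves of $\mathcal F^u_f$, along which $h$ conjugates the uniformly expanding one--dimensional cocycle generated by $\log\|Df\,|E^u\|$ to a linear one.

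\emph{From equal periodic exponents to absolute continuity.} Assume $\lambda^u(p)$ is independent of the periodic point $p$; then $\frac{1}{\pi(p)}\sum_{j=0}^{\pi(p)-1}\log\|Df\,|E^u(f^jp)\|$ equals a fixed constant $c$, so by the Liv\v{s}ic theorem $\log\|Df\,|E^u\|=c+\varphi\circ f-\varphi$ with $\varphi$ continuous. This gives bounded distortion of the unstable cocycle along the leaves of $\mathcal F^u_f$, whence $h$ restricted to those leaves is absolutely continuous (by the de la Llave--Marco--Moriy\'on type regularity results for conjugacies of one--dimensional uniformly hyperbolic systems, the leafwise conjugacy between a bounded--distortion expanding cocycle and a linear one is absolutely continuous). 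Inserting this into the leafwise description above, the weak conditionals of $m^{cu}_x$ become absolutely continuous with respect to arclength for $m$--a.e.\ $x$, hence $\mathcal F^c_f$ is absolutely continuous.

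\emph{From absolute continuity to equal periodic exponents, and the main obstacle.} Suppose $\mathcal F^c_f$ is absolutely continuous but $\lambda^u(p)\neq\lambda^u(q)$ for two periodic points $p,q$, so $\log\|Df\,|E^u\|$ is \emph{not} cohomologous to a constant. The strategy is: the failure of the Liv\v{s}ic equation destroys bounded distortion of the unstable cocycle along the leaves of $\mathcal F^u_f$; since these leaves are dense, the conjugacy $h$ is then not absolutely continuous along them on a set of leaves of positive $m^{cu}$--measure; pushing this back through the leafwise reduction, the weak conditionals of $m^{cu}_x$ acquire a singular (indeed atomic, by a Ruelle--Wilkinson/Pesin--Hirayama type argument adapted to the non--compact center setting) part on a positive measure set of unstable leaves; this yields a set $Z\subset\mathbb T^3$ of positive volume meeting $m$--a.e.\ center leaf in an arclength--null set, contradicting the Fubini property in the definition of absolute continuity. (Alternatively, the distortion produced this way can be fed, via the resulting estimate on $\int\log\|Df\,|E^c\|\,dm$, into Theorem~A or Theorem~\ref{teoSX}.) The expected principal difficulty is exactly this direction: (i) making the leafwise reduction rigorous, i.e.\ controlling how the infinite, SRB--Lebesgue conditional $m^{cu}_x$ disintegrates along the weak subfoliation and matching the resulting $\sigma$--finite disintegration with the global Fubini notion of absolute continuity; and (ii) showing that when the Liv\v{s}ic equation fails, the accumulated distortion of $\log\|Df\,|E^u\|$ forces those leafwise conditionals to be singular. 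The remaining ingredients — structural stability, absolute continuity of the strong foliations, and the Liv\v{s}ic and de la Llave--Marco--Moriy\'on theorems — are comparatively routine.
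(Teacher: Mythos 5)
First, a point of context: the paper does not prove this statement at all --- it is quoted verbatim from Gogolev \cite{Go} as background, so there is no internal proof to compare your argument with; your proposal has to stand or fall as a proof of Gogolev's theorem itself. As such it does not stand yet. The ``if'' direction is broadly the right idea (equal periodic data $\Rightarrow$ Liv\v{s}ic $\Rightarrow$ $\log\|Df|E^u\|$ cohomologous to a constant $\Rightarrow$ bounded distortion of the strong-unstable cocycle inside the two-dimensional unstable leaves $\Rightarrow$ uniform absolute continuity of the weak-foliation holonomies), although your detour through absolute continuity of $h$ along $\mathcal{F}^u_f$ is an unnecessary weak link: absolute continuity of a conjugacy restricted to strong unstable leaves does not by itself transfer to absolute continuity of $\mathcal{F}^c_f$; the standard route is to run the bounded-distortion estimate directly on the center holonomies between strong-unstable transversals inside a $cu$-leaf.

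The genuine gap is the ``only if'' direction, which you yourself flag as the ``expected principal difficulty'' and then leave as a plan rather than a proof. Knowing that $\log\|Df|E^u\|$ is \emph{not} cohomologous to a constant tells you that bounded distortion fails along some orbits, but it does not, by itself, produce a positive-volume set meeting almost every center leaf in an arclength-null set; unbounded distortion of approximating holonomy Jacobians is compatible, a priori, with the limiting holonomies still being absolutely continuous. The two references you invoke to close this, Ruelle--Wilkinson \cite{RW} and Hirayama--Pesin \cite{HP}, require the center leaves to be compact fibers of a bundle (and a definite sign of the center exponents on the fibers); here the center leaves are dense lines with positive exponent, and adapting atomicity arguments to that setting is precisely the nontrivial content of \cite{Go} (and of \cite{PTV}, \cite{SX} in related settings), not a routine modification. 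The alternative you mention in passing --- feeding the failure of Liv\v{s}ic into Theorem~A or Theorem~\ref{teoSX} --- also does not go through as stated: those criteria need $\int_{\mathbb{T}^3}\log\|Df|E^c\|\,dm>\lambda^c_L$ (or $\lambda^c_f>\lambda^c_A$ on a positive-volume set), and unequal \emph{unstable} periodic exponents give no such estimate on the \emph{center} integral; indeed for a volume-preserving Anosov $f$ one can perfectly well have $\lambda^u(p)\neq\lambda^u(q)$ while the volume-average center exponent stays at or below $\lambda^c_L$. So the implication ``$\mathcal{F}^c_f$ absolutely continuous $\Rightarrow$ constant unstable periodic data'' --- which is the heart of the theorem --- is still missing a mechanism.
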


In \cite{PTV} the authors proved study the desintegration of the volume along to $\mathcal{F}^c_f,$ where $f$ is a DA diffeomorphism of $\mathbb{T}^3 $ homotopic to a liner hyperbolic authomorphism $A,$ when $\lambda^c_f \cdot \lambda^c_A < 0.$

\begin{theorem} Consider $A$ a linear Anosov  automorphism of $\mathbb{T}^3$ with three distinct
Lyapunov exponents $\lambda^s_A <  \lambda^c_A < \lambda^u_A.$ Let $f : \mathbb{T}^3 \rightarrow \mathbb{T}^3$
be volume preserving
DA diffeomorphism (homotopic to A). Assume that $f$ is partially hyperbolic,
volume preserving and ergodic. Also assume that $\lambda^c_f \cdot \lambda^c_A < 0,$ then the disintegration
of volume along center leaves of $f $ is atomic and in fact there is just
one atom per leaf.
\end{theorem}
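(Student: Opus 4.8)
\noindent The plan is to transport the disintegration to the linear model $A$, where the center bundle is \emph{uniformly} expanded, and there to exploit the fact that a non-zero constant is never a measurable coboundary of a probability preserving transformation. Since $\mathcal{F}^c_f=\mathcal{F}^c_{f^{-1}}$ and $f^{-1}$ is again an ergodic volume preserving DA diffeomorphism, with linearization $A^{-1}$ and center exponents $\lambda^c_{f^{-1}}=-\lambda^c_f$, $\lambda^c_{A^{-1}}=-\lambda^c_A$, I may replace $(f,A)$ by $(f^{-1},A^{-1})$ if needed and assume from now on that $\lambda^c_A>0$, hence $\lambda^c_f<0$. Let $h\colon\mathbb{T}^3\to\mathbb{T}^3$ be the semiconjugacy $h\circ f=A\circ h$ with $h$ homotopic to the identity and at bounded distance from it on $\RR^3$; by \cite{H} it carries the center leaves of $f$ onto those of $A$, it induces a bijection of the two center leaf spaces, each fiber $h^{-1}(z)$ is a compact connected subset of a single center leaf of $f$ (a point or a center arc) of diameter at most a constant $D$ independent of $z$, and $f(h^{-1}(z))=h^{-1}(Az)$. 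Put $\hat m:=h_*m$; it is $A$-invariant and, being a factor of the ergodic system $(f,m)$, ergodic. Since $h$ intertwines the two center foliations, pushing forward the disintegration of $m$ along $\mathcal{F}^c_f$ produces the disintegration of $\hat m$ along $\mathcal{F}^c_A$, with $\hat m^c_z=h_*(m^c_x)$ whenever $h(\mathcal{F}^c_f(x))=\mathcal{F}^c_A(z)$ (here $m^c_{\bullet}$ and $\hat m^c_{\bullet}$ denote the systems of conditional measures).

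Step 1: the conditionals $m^c_x$ are purely atomic. By Theorem~A applied to $f^{-1}$ (for which $\lambda^c_{f^{-1}}=-\lambda^c_f>0$ a.e.\ while $\lambda^c_{A^{-1}}=-\lambda^c_A<0$), $\mathcal{F}^c_f$ cannot be absolutely continuous, which rules out the Lebesgue alternative. To exclude a non-atomic but non-absolutely-continuous $m^c_x$ one runs a Ruelle--Wilkinson/Ma\~{n}\'{e}-type argument: a center arc carrying definite $m^c_x$-mass and lying in a single $h$-fiber is expanded under backward iteration (because $\lambda^c_{f^{-1}}>0$), yet its backward images $f^{-n}(h^{-1}(z))=h^{-1}(A^{-n}z)$ have length $\le D$ for every $n$; together with Poincar\'{e} recurrence of $(A,\hat m)$ and a bounded-distortion estimate on these uniformly bounded arcs this is contradictory — the bound $D$ here plays the role that compactness of the center leaves plays in Ma\~{n}\'{e}'s argument. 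Thus $m^c_x$ is atomic; by ergodicity the number $N$ of its atoms is $m$-a.e.\ constant and, again by ergodicity, every atom carries weight $1/N$.

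Step 2: exactly one atom, via the linear model. The measure $\hat m^c_z=h_*(m^c_x)$ is atomic with at most $N$ atoms, so by ergodicity of $(A,\hat m)$ its number of atoms $N'$ is $\hat m$-a.e.\ constant, with $1\le N'\le N$. Suppose $N'\ge2$. Because $A$ is linear it scales lengths along every center leaf $\mathcal{F}^c_A(z)$ uniformly by the factor $e^{\lambda^c_A}$ (the modulus of the middle eigenvalue of $A$), so, letting $g(z)$ be the minimal gap between consecutive atoms of $\hat m^c_z$ in the arc-length parameter of $\mathcal{F}^c_A(z)$ — a positive, finite, measurable function — the equivariance $A_*\hat m^c_z=\hat m^c_{Az}$ forces $g(Az)=e^{\lambda^c_A}g(z)$, i.e.\ $\log g\circ A-\log g\equiv\lambda^c_A\neq0$. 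But a non-zero constant $c$ is never a measurable coboundary of a probability preserving transformation: the level sets $B_k:=(\log g)^{-1}\big([kc,(k+1)c)\big)$, $k\in\ZZ$, satisfy $A(B_k)=B_{k+1}$, so all $\hat m(B_k)$ coincide, contradicting $\sum_{k\in\ZZ}\hat m(B_k)=1$. Hence $N'=1$, i.e.\ $\hat m^c_z=\delta_{p(z)}$ for a measurable $A$-equivariant section $p$; pulling this back, $m^c_x$ is supported on the single $h$-fiber $I_x:=h^{-1}(p(z))$, a compact center arc (or point) of length $\le D$, with $f(I_x)=I_{f(x)}$.

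Step 3, and the main obstacle: upgrading $N'=1$ to $N=1$. The $N$ equal-weight atoms of $m^c_x$ now all lie in the uniformly bounded arc $I_x$, and $f$ permutes the arcs $I_x$; moreover each atom is Oseledets-regular, so the center exponent along it equals $\lambda^c_f<0$. One wants to show that the minimal gap $\rho(x)$ between the atoms satisfies $\rho(f^nx)\to0$ for $m$-a.e.\ $x$, for then — since $\rho>0$ whenever $N\ge2$ — Poincar\'{e} recurrence of $(f,m)$ is violated and $N=1$ follows. This reduces to a bounded-distortion estimate for $Df^n|E^c$ along the arcs $f^n(I_x)=I_{f^nx}$, whose lengths remain $\le D$: such control — which is precisely where the confinement of the $h$-fibers substitutes for compactness of the leaves — yields $\rho(f^nx)\le\mathrm{const}\cdot\|Df^n|E^c_{y(x)}\|\to0$ for an atom $y(x)$. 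Establishing this distortion estimate along the possibly non-absolutely-continuous center foliation is the heart of the matter; an alternative for this last step is to use transitivity of the strong stable and strong unstable foliations to move the $N$-point invariant multisection around until it must collapse to a single section. Either way $N=1$, which is the assertion.
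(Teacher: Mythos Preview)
This theorem is not proved in the present paper at all: it is quoted from \cite{PTV} as background for Theorem~B, which handles the complementary sign case $\lambda^c_f\cdot\lambda^c_A>0$. So there is no ``paper's own proof'' to compare your attempt against; what follows is an assessment of your sketch on its own merits.

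Your overall architecture --- semiconjugacy $h$, the uniformly bounded center $h$-fibers, transport to the linear model, and a coboundary argument there --- is exactly the right set of ideas, and your Step~2 argument (a non-zero constant is never a measurable coboundary) is clean. The genuine gap is in Step~1 and in the order of the steps. Your atomicity argument assumes you can find an $h$-fiber ``carrying definite $m^c_x$-mass''; but nothing you have established forces any single $h$-fiber to have positive conditional mass --- a non-atomic $m^c_x$ can perfectly well assign measure zero to every individual fiber, and then the bound $D$ on fiber lengths gives you no leverage. The way \cite{PTV} avoids this is essentially to run your Step~2 \emph{first} and without the atomicity hypothesis: on the linear side $A$ scales center arc-length by exactly $e^{\lambda^c_A}$, so the diameter of the support of $\hat m^c_z$ (rather than the gap between atoms) already satisfies $\mathrm{diam}(\hat m^c_{Az})=e^{\lambda^c_A}\mathrm{diam}(\hat m^c_z)$, and your coboundary argument then forces $\hat m^c_z$ to be a Dirac with no atomicity assumed. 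Only after pulling this back --- so that $m^c_x$ is now known to live on a single bounded arc $I_x$ --- does the Ruelle--Wilkinson/Ma\~n\'e mechanism (your Steps~1 and~3 combined) legitimately apply to give one atom. You also pass over a technical point that \cite{PTV} treat carefully: disintegration along the non-compact center leaves is a priori only defined in foliated boxes, and one must argue that a leafwise conditional measure (up to scaling) and the equivariance $f_*m^c_x\propto m^c_{f(x)}$ make global sense before pushing anything through $h$. Finally, you yourself flag Step~3 as ``the main obstacle''; once Steps~1--2 are reordered as above it is no longer an obstacle, but as your sketch stands the proof is incomplete.
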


 In this paper we treated the case  $\lambda^c_f \cdot \lambda^c_A > 0,$   non considered in \cite{PTV}. Denote by $DA_m(\mathbb{T}^3)$ the set of all $m$ preserving DA diffeomorphism of $\mathbb{T}^3$ and $\mathcal{A}(\mathbb{T}^3)$ the set of all Anosov diffeomorphism of $\mathbb{T}^3.$ In the case $\lambda^c_f \cdot \lambda^c_A > 0,$ we can prove:

\begin{theorem}[Theorem B] There is an open set $U \subset DA_m(\mathbb{T}^3) \setminus \overline{\mathcal{A}(\mathbb{T}^3)}, $ such that for any $f \in U$ has the same linearization $A$ and  $\lambda^c_f(x) > \lambda^c_A > 0,$  for $m$ almost everywhere $x \in \mathbb{T}^3.$ Particularly $\mathcal{F}^c_f$ is non absolutely continuous for every $f \in U.$
\end{theorem}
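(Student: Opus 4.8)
\emph{Sketch of the argument.} The plan is to reduce the statement to the construction of a single $C^{\infty}$ volume preserving diffeomorphism $f_{0}\colon\mathbb{T}^{3}\to\mathbb{T}^{3}$ with four robust properties, and then take $U$ to be a small $C^{1}$-neighbourhood of $f_{0}$ inside $\operatorname{Diff}^{\infty}_{m}(\mathbb{T}^{3})$. The properties I want are: (i) $f_{0}$ is absolutely partially hyperbolic with $\dim E^{c}=1$ and its linearization is a linear Anosov automorphism $A$ with three distinct positive eigenvalues $\mu_{s}<1<\mu_{c}<\mu_{u}$; (ii) $\int_{\mathbb{T}^{3}}\log\|Df_{0}|E^{c}\|\,dm>\lambda^{c}_{A}$ (recall $\lambda^{c}_{A}=\log\mu_{c}$); (iii) $f_{0}$ has a hyperbolic fixed point $q_{0}$ with $\|Df_{0}|E^{c}_{q_{0}}\|<1$; (iv) $f_{0}$ preserves $m$. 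Each of (i)--(iv) is a $C^{1}$-open condition: absolute partial hyperbolicity is $C^{1}$-open and the invariant splitting depends continuously on the map, so $g\mapsto\int_{\mathbb{T}^{3}}\log\|Dg|E^{c}_{g}\|\,dm$ is continuous, the hyperbolic continuation $q(g)$ of $q_{0}$ persists with continuously varying center eigenvalue, and the linearization (the action on $H_{1}$) is locally constant in $g$. So on a small enough $U$ all of (i)--(iii) hold, every $g\in U$ has the same linearization $A$, and $U\subset DA_{m}(\mathbb{T}^{3})$.

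To see $U\cap\overline{\mathcal{A}(\mathbb{T}^{3})}=\varnothing$ it is enough, $U$ being open, to check that no $g\in U$ is Anosov. If $g\in U$ were Anosov then, being homotopic to $A$, it would be conjugate to $A$ (Franks--Manning), so its Anosov splitting $TM=F^{s}\oplus F^{u}$ has $\dim F^{s}=1$; since $F^{u}$ is uniformly expanding while the strong stable bundle $E^{s}$ of $g$ is uniformly contracted, every nonzero vector in $E^{s}$ lies in $F^{s}$, hence $E^{s}=F^{s}$. But then for nonzero $v\in E^{c}_{q(g)}$ we have $\|Dg^{n}(q(g))v\|=\|Dg(q(g))|E^{c}\|^{\,n}\|v\|\to 0$ by (iii), which forces $v\in F^{s}=E^{s}$, contradicting $E^{c}\cap E^{s}=\{0\}$. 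Hence $g$ is not Anosov.

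Granting (i)--(iv), the theorem follows quickly. Fix $g\in U$. Since $E^{c}$ is one dimensional and $Dg$-invariant, $\log\|Dg^{n}(x)|E^{c}\|=\sum_{j=0}^{n-1}\varphi(g^{j}x)$ with $\varphi:=\log\|Dg(\cdot)|E^{c}\|$ continuous, hence $\varphi\in L^{1}(m)$; by Birkhoff, $\lambda^{c}_{g}(x)=\lim_{n}\frac1n\sum_{j<n}\varphi(g^{j}x)$ for $m$-a.e.\ $x$ and $\int_{\mathbb{T}^{3}}\lambda^{c}_{g}\,dm=\int_{\mathbb{T}^{3}}\varphi\,dm>\lambda^{c}_{A}>0$. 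In particular $\lambda^{c}_{g}$ is not $m$-a.e.\ zero, so by Theorem~\ref{teoHU} $g$ is ergodic, and therefore $\lambda^{c}_{g}(x)=\int_{\mathbb{T}^{3}}\varphi\,dm>\lambda^{c}_{A}$ for $m$-a.e.\ $x$. This is the main assertion; the ``particularly'' part is then immediate from Theorem~A applied with $P$ the full-measure set $\{\lambda^{c}_{g}>\lambda^{c}_{A}>0\}$, since absolute continuity of $\mathcal{F}^{c}_{g}$ would give $\lambda^{c}_{g}\le\lambda^{c}_{A}$ a.e.

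It remains to construct $f_{0}$, which is the heart of the proof, and also its main obstacle. Pick any linear Anosov $A$ with three distinct positive eigenvalues $\mu_{s}<1<\mu_{c}<\mu_{u}$ and a fixed point $0$; in eigencoordinates near $0$, $A=\operatorname{diag}(\mu_{s},\mu_{c},\mu_{u})$. Choose $\rho\in(\mu_{s},1)$ close enough to $1$ that $\rho^{2}\mu_{u}>1$, and set $\mu_{s}'=(\rho\mu_{u})^{-1}$, so that $\mu_{s}<\mu_{s}'<\rho<1<\mu_{u}$ and $\mu_{s}'\rho\mu_{u}=1$. \emph{Step $1$ (a Ma\~{n}\'{e}-type surgery):} replace $A$ by $A_{1}=\psi_{1}\circ A$, where $\psi_{1}$ is a volume preserving diffeomorphism supported in a small ball $B(0,\tau)$, isotopic to the identity, taking $Df$ at $0$ to $\operatorname{diag}(\mu_{s}',\rho,\mu_{u})$ and interpolating monotonically to the identity; by the standard Ma\~{n}\'{e} cone estimate, for $\tau$ small $A_{1}$ stays absolutely partially hyperbolic (with $\sup\|DA_{1}|E^{s}\|=\mu_{s}'<\rho=\inf\|DA_{1}|E^{c}\|$ and $\sup\|DA_{1}|E^{c}\|=\mu_{c}<\mu_{u}=\inf\|DA_{1}|E^{u}\|$), has $0$ as a center-contracting fixed point, and satisfies $\int_{\mathbb{T}^{3}}\log\|DA_{1}|E^{c}\|\,dm=\lambda^{c}_{A}-\delta$ with $0<\delta=O(\tau^{3})$. \emph{Step $2$ (pumping contraction into the center):} fix a ball $B'$ disjoint from $B(0,\tau)$ and a rate $M\in(\mu_{c},\mu_{u})$, and let $f_{0}=\psi_{2}\circ A_{1}$, where $\psi_{2}$ is a volume preserving diffeomorphism supported in $B'$, isotopic to the identity, such that in $A$-adapted coordinates $Df_{0}=\operatorname{diag}(\mu_{s}\mu_{c}/M,\,M,\,\mu_{u})$ on an inner ball of $B'$ and $\|Df_{0}|E^{c}\|$ stays between $\mu_{c}$ and $M$ throughout $B'$ (so the extra center expansion is pumped out of the stable direction). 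Since the three rates of $f_{0}$ everywhere lie in the well separated interval $[\mu_{s}\mu_{c}/\mu_{u},\mu_{u}]$ and the order $\|\cdot|E^{s}\|<\|\cdot|E^{c}\|<\|\cdot|E^{u}\|$ is kept pointwise, $f_{0}$ is absolutely partially hyperbolic; it preserves $m$; $\psi_{2}\psi_{1}$ being isotopic to the identity, its linearization is $A$; and $q_{0}=0$ is still a center-contracting fixed point. Moreover $\int_{\mathbb{T}^{3}}\log\|Df_{0}|E^{c}\|\,dm=\lambda^{c}_{A}-\delta+c_{0}$, where $c_{0}>0$ is a fixed positive number (at least $\log(M/\mu_{c})$ times the measure of the inner ball of $B'$); choosing $B',M$ first and then $\tau$ small enough that $\delta<c_{0}$ yields (ii). Thus $f_{0}$ satisfies (i)--(iv). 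The delicate points here are exactly that, for a volume preserving map, $\int_{\mathbb{T}^{3}}(\log\|Df_{0}|E^{s}\|+\log\|Df_{0}|E^{c}\|+\log\|Df_{0}|E^{u}\|)\,dm=0$, so raising the average center expansion forces compensating stable contraction and this must be done without destroying the invariant cone fields, and that the surgery in Step $1$ is not $C^{1}$-small, so one must invoke the Ma\~{n}\'{e}-type argument -- using the spectral gap of $A$ and the smallness of $\tau$ -- to guarantee that partial hyperbolicity and the global ordering of the three rates survive it.
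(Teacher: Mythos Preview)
Your high-level strategy matches the paper's: build one example $f_{0}\in DA_{m}(\mathbb{T}^{3})\setminus\overline{\mathcal{A}(\mathbb{T}^{3})}$ with $\int\log\|Df_{0}|E^{c}\|\,dm>\lambda^{c}_{A}>0$ and a center-contracting fixed point, then pass to a $C^{1}$-neighbourhood. Your deduction of the a.e.\ inequality from ergodicity via the Hammerlindl--Ures dichotomy (Theorem~\ref{teoHU}) is actually cleaner than the paper's route through stable ergodicity and \cite{HHU}, and your index argument excluding $\overline{\mathcal{A}(\mathbb{T}^{3})}$ is the same as the paper's. The essential difference is in how $f_{0}$ is built. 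The paper does \emph{not} take an arbitrary $A$: it uses the family $B_{n}$ from \cite{PT} with $\lambda^{c}_{A}=\log(1+\alpha_{n})$ arbitrarily close to $0$, so that both perturbations can be made $C^{1}$-small. The integral is raised above $\lambda^{c}_{A}$ by the Baraviera--Bonatti lemma (Proposition~\ref{babo}), and the index change at the fixed point is done by the conservative Franks lemma (Proposition~\ref{franks}), moving the center eigenvalue only from $1+\alpha_{n}$ to $1-\alpha_{n}$ on a shrinking sequence of balls; since everything is $O(\alpha_{n})$-small, a uniform cone argument (Lemma~\ref{partially}) gives absolute partial hyperbolicity, and dominated convergence recovers the integral inequality after the Franks step.

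Your construction, by contrast, performs two surgeries that are not $C^{1}$-small, and the ``delicate points'' you flag are genuine gaps, not technicalities. A volume-preserving $\psi_{1}$ (or $\psi_{2}$) supported in a ball cannot be of product form in the $A$-eigenbasis (the only such compactly supported maps are the identity), so in the transition annuli $D\psi_{i}$ necessarily mixes the eigendirections. Consequently $E^{\sigma}_{A_{1}}\neq E^{\sigma}_{A}$ there, and your claimed equalities $\sup\|DA_{1}|E^{s}\|=\mu_{s}'$, $\inf\|DA_{1}|E^{c}\|=\rho$ are unjustified; likewise the lower bound $c_{0}\ge m(\text{inner ball})\cdot\log(M/\mu_{c})$ silently assumes $\|Df_{0}|E^{c}_{f_{0}}\|\ge\mu_{c}$ on the transition annulus of $B'$, which you cannot assert without first knowing $E^{c}_{f_{0}}$. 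And ``pointwise order of the three rates'' does not yield \emph{absolute} partial hyperbolicity; you need global $\sup$/$\inf$ separation, which again hinges on controlling the bundles in the transition regions. The paper's choice of small $\lambda^{c}_{A}$ is exactly what lets it bypass all of this: the perturbations become arbitrarily $C^{1}$-small, the bundles of $B_{n}$ move only slightly, and the cone fields persist for free. If you want to salvage your approach for a general $A$, you would need an explicit volume-preserving Bonatti--Viana-type construction with quantitative control of the invariant cones through the interpolation regions; as written, that control is asserted but not supplied.
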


The proof of Theorem B consists to combine two different types of perturbations in some steps as follows:

\begin{enumerate}
\item Firstly, using the linear maps introduced in \cite{PT}, we have linear Anosov linear automorphisms, with center Lyapunov exponent arbitrarily close to zero.

\item After, by a small Baraviera-Bonatti perturbation (see \cite{BB}), we increase a little the center Lyapunov exponent.

\item Using the conservative version of Franks lemma (see \cite{BDP}), we modify the stable index of a fixed point, but yet preserving in this step the increment of the center Lyapunov exponent obtained in the previous step. The perturbation here is made carefully, such that it remains partially hyperbolic.

 \item The neighborhood $U$ requiered in the Theorem B will be an open set in $PH_m(\mathbb{T}^3)$ around the diffeomorphism obtained in the previous step, or an open ball around an stably ergodic perturnation of diffeomorphisms obtained in the previous step.
\end{enumerate}

\section{Proof of Theorem A}

Before to prove the Theorem A, let us give some ingredients necessary to  the proof.

\begin{definition} A foliation $\mathcal{F}$ of a closed manifold $M$ is called quasi-isometric if there is a constant $Q > 0,$ such that in the cover level $\tilde{M}$ we have:

$$ d_{\mathcal{\widetilde{W}}}(x, y) \leq Q\cdot d_{\widetilde{M}}(x,y) + Q, $$

for every $x,y$ points in the same lifted leaf $\widetilde{\mathcal{W}},$ of $\widetilde{\mathcal{F}},$ where $\widetilde{\mathcal{F}}$ denotes the lift of $\mathcal{F}$ on $\tilde{M}.$   Here $d_{\widetilde{\mathcal{W}}}$ denotes the riemannian metric on $\widetilde{\mathcal{W}}$ and $d_{\widetilde{M}}$ is a riemannian metric of $\widetilde{M}.$

\end{definition}

\begin{theorem} \cite{BBI}, \cite{H} Let $f: \mathbb{T}^3 \rightarrow \mathbb{T}^3$ be a partially hyperbolic diffeomorphism, then $\mathcal{F}^s, \mathcal{F}^c$ and $\mathcal{F}^u$ are quasi-isometric foliations.
\end{theorem}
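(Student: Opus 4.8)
The plan is to prove the three quasi-isometry statements separately, observing first that $\mathcal{F}^s_f=\mathcal{F}^u_{f^{-1}}$, so it suffices to treat $\mathcal{F}^u_f$ and $\mathcal{F}^c_f$ (the stable statement for $f$ being the unstable statement for $f^{-1}$, again a partially hyperbolic diffeomorphism of $\mathbb{T}^3$). Let $A$ be the linearization of $f$; by the result of \cite{H} quoted above $A$ is partially hyperbolic, so $\mathbb{R}^3=E^s_A\oplus E^c_A\oplus E^u_A$ splits into $A$-invariant lines with the domination inherited from $f$, and the associated linear foliations of $\mathbb{R}^3$ are affine, hence trivially quasi-isometric. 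Lift $f$ to $\tilde f\colon\mathbb{R}^3\to\mathbb{R}^3$; since $f$ is homotopic to $A$, the map $\phi:=\tilde f-A$ is $\mathbb{Z}^3$-periodic, so $\sup_{\mathbb{R}^3}\|\phi\|<\infty$ and $\phi$ is globally Lipschitz. The entire argument is a comparison of $\tilde f$ with the affine model $A$, together with one elementary geometric fact, call it $(\star)$: a rectifiable curve in $\mathbb{R}^3$ all of whose long secant vectors lie in a fixed closed cone about a line $\ell$ has intrinsic length between any two of its points at most $Q\,\|\cdot\|+Q$ (project onto $\ell$ and check the projection is eventually strictly monotone with definite speed).

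For $\mathcal{F}^u_f$ the mechanism is the classical one, used nowhere else than in the fact that stable and unstable manifolds of an Anosov diffeomorphism of a torus are quasi-isometric: $\tilde f$ uniformly expands $E^u_f$ and is a bounded perturbation of the affine $A$. Given $\tilde x,\tilde y$ on a common leaf of $\widetilde{\mathcal{F}^u_f}$ joined by a leaf arc of intrinsic length $L$, iterating $\tilde f^{-1}$ contracts that arc exponentially, so $\|\tilde f^{-n}\tilde x-\tilde f^{-n}\tilde y\|\to 0$ exponentially; feeding $\tilde f^{-1}=A^{-1}-A^{-1}\phi\circ\tilde f^{-1}$ into a telescoping identity expresses $\tilde x-\tilde y$ in terms of $A$-iterates of bounded quantities, and decomposing along the $A$-splitting and using the domination within $A$ traps the long secant vectors of an unstable leaf in a fixed cone about $E^u_A$. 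Then $(\star)$ gives quasi-isometry of $\mathcal{F}^u_f$, and applying this to $f^{-1}$ gives it for $\mathcal{F}^s_f$.

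For $\mathcal{F}^c_f$ — the hard case — one first uses dynamical coherence (the theorem of \cite{BBI} quoted above) to write $\mathcal{F}^c_f=\mathcal{F}^{cs}_f\cap\mathcal{F}^{cu}_f$. The plan is then: (i) promote the previous step to a coarse description of the two-dimensional leaves, namely that each leaf of $\widetilde{\mathcal{F}^{cs}_f}$ is a properly embedded plane that is coarsely an affine translate of $E^s_A\oplus E^c_A$, and symmetrically each $\widetilde{\mathcal{F}^{cu}_f}$-leaf is coarsely a translate of $E^c_A\oplus E^u_A$; here the fact that these leaves lift to properly embedded planes is available because $\pi_1(\mathbb{T}^3)=\mathbb{Z}^3$ is abelian (Novikov–Rosenberg-type results on foliations of $\mathbb{T}^3$), and the quantitative coarse control comes from the quasi-isometry of $\mathcal{F}^s_f$ inside the $cs$-leaves (resp. $\mathcal{F}^u_f$ inside the $cu$-leaves) together with a coarse version of contraction/expansion. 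Given (i), a center leaf is the intersection of a coarse-$cs$ plane with a coarse-$cu$ plane whose model planes meet transversally exactly along the line $E^c_A$; elementary geometry of pairs of slabs in $\mathbb{R}^3$ then forces this intersection curve to have all of its long secants in a fixed cone about $E^c_A$, and $(\star)$ concludes that $d_{\widetilde{\mathcal{F}^c_f}}(x,y)\le Q\, d_{\mathbb{R}^3}(x,y)+Q$.

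I expect the genuine obstacle to be step (i): the quasi-isometry of $\mathcal{F}^s_f$ and $\mathcal{F}^u_f$ controls only the fast direction inside each center-stable or center-unstable leaf, and ruling out that such a leaf drifts or spirals transverse to its model plane on arbitrarily large scales is exactly what requires the branching-foliation machinery of \cite{BBI}; it is here that the ambient dimension being $3$ (equivalently, $\pi_1$ abelian, forcing a global product structure on $\mathbb{R}^3$) is used essentially, the analogous statement being known to fail for partially hyperbolic diffeomorphisms of higher-dimensional nilmanifolds. By comparison, the unstable step and the final passage from coarse planarity to quasi-isometry are bookkeeping with $\tilde f=A+\phi$ and repeated use of the elementary fact $(\star)$.
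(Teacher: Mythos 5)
There is a genuine gap, and it sits exactly at the point your whole plan leans on: the ``elementary geometric fact'' $(\star)$ is false as stated. Controlling only the \emph{long} secants of a curve does not bound its intrinsic length linearly by displacement. For a concrete counterexample, fix the threshold $M$ and a cone of half-angle $\theta$ about the line $\ell$, and take a smooth embedded curve contained in a solid cylinder of radius $r\ll M\tan\theta$ around $\ell$: for instance a helix of radius $r$ with arbitrarily small pitch, or a curve that runs up and down the cylinder $N$ times with bounded curvature. Any two of its points at Euclidean distance $\geq M$ differ by a vector whose component transverse to $\ell$ is at most $2r$, hence the secant lies in the cone; yet the arclength between nearby points is arbitrarily large compared with $Q\,\|\cdot\|+Q$ for any $Q$ depending only on $M$ and $\theta$ (the projection onto $\ell$ is \emph{not} eventually monotone, contrary to your parenthetical justification). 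The correct elementary statement requires the cone condition on \emph{all} secants (equivalently on tangent vectors), i.e.\ that the leaf is a Lipschitz graph over $\ell$; but the dynamics only hands you large-scale comparisons with $A$, and bridging ``large-scale cone control'' to ``uniform quasi-isometry'' is precisely the content of the theorem, not a lemma you may assume. This is why \cite{BBI} do not argue leafwise with a cone condition: their proof of quasi-isometry of $\mathcal{F}^s,\mathcal{F}^u$ is a criterion about the lifted foliation as a whole on $\mathbb{R}^3$ (using that it is $\mathbb{Z}^3$-periodic, the polynomial growth of $\pi_1(\mathbb{T}^3)$, and Novikov-type/topological arguments), and Hammerlindl \cite{H} then obtains the center case from the strong ones together with absolute domination.

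Two further steps would also fail or are circular as written. First, in the unstable case the telescoping $\tilde f=A+\phi$ does not trap long secants in a cone about $E^u_A$: writing $\tilde x-\tilde y=A^nv_n+\sum_{k\le n}A^{k-1}w_k$ with $v_n=\tilde f^{-n}\tilde x-\tilde f^{-n}\tilde y$ and $w_k$ bounded, the components of the error sum along $E^c_A$ are multiplied by $(\lambda^c_A)^{k}$, which is \emph{expanding} in the relevant case $\lambda^c_A>1$; summability would need the backward contraction rate of $f$ along $E^u_f$ to beat $1/\lambda^c_A$, a relation between $f$ and its linearization that you have not established and that, in the literature, comes out of (not before) the quasi-isometry/global product structure theory. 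For the same reason, invoking ``$f_*$ is partially hyperbolic'' from \cite{H} as an input is delicate: that fact is proved in \cite{BBI,H} alongside or after the quasi-isometry of the strong foliations. Second, your step (i) for the center case --- that every lifted center-stable (center-unstable) leaf is coarsely a translate of $E^s_A\oplus E^c_A$ (resp.\ $E^c_A\oplus E^u_A$) --- is essentially Hammerlindl's global product structure/leaf conjugacy theorem, whose proof takes quasi-isometry of $\mathcal{F}^s,\mathcal{F}^u$ as an ingredient; as you yourself note, this is ``the genuine obstacle,'' so deferring it to ``the branching-foliation machinery of \cite{BBI}'' leaves the proposal proving the cited theorem from the cited theorem. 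The reduction $\mathcal{F}^s_f=\mathcal{F}^u_{f^{-1}}$ and the use of dynamical coherence are fine, but the core of the argument is missing.
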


The quasi isometry of the invariant foliations of a partially hyperbolic diffeomorphism implies some consequences of the geometry of the leafs in large scale as stated in the next lemmas.

\begin{lemma}\label{lemmaH} \cite{H} Let $f: \mathbb{T}^3 \rightarrow \mathbb{T}^3$ be a partially hyperbolic diffeomorphism with linearization $A.$ For each $k \in \mathbb{Z}$ and $C > 1$ there is an $M > 0$ such that for all $x, y \in \mathbb{R}^3,$

$$||x - y || \geq M  \Rightarrow \frac{1}{C} <  \frac{||\tilde{f}^k(x) - \tilde{f}^k(y)||}{|| \tilde{A}^k(x) - \tilde{A}^k(y)|| } < C, $$

where $\tilde{f}: \mathbb{R}^3 \rightarrow \mathbb{R}^3 $ denotes the lift of $f$ to $ \mathbb{R}^3.$
\end{lemma}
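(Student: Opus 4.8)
The plan is to reduce the lemma to the elementary observation that a lift of a torus diffeomorphism differs from the linear part of its linearization by a \emph{bounded} map, and that this boundedness is preserved under iteration. Since $f$ is homotopic to $A$, the lift $\tilde{f}$ satisfies $\tilde{f}(x+n) = \tilde{f}(x) + \tilde{A}n$ for every $n \in \mathbb{Z}^3$; hence $g := \tilde{f} - \tilde{A}$ is $\mathbb{Z}^3$-periodic and therefore bounded, say $\sup_x ||g(x)|| = D < \infty$. Applying the same remark to $f^{-1}$, whose linearization is $A^{-1}$, the map $h := \tilde{f}^{-1} - \tilde{A}^{-1}$ is likewise $\mathbb{Z}^3$-periodic and bounded.

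First I would prove, by induction on $|k|$, that for every $k \in \mathbb{Z}$ one can write $\tilde{f}^k = \tilde{A}^k + g_k$ with $g_k : \mathbb{R}^3 \to \mathbb{R}^3$ bounded; set $D_k := \sup_x ||g_k(x)||$. Indeed $g_0 \equiv 0$ and $g_1 = g$, while from $\tilde{f}^{k+1} = \tilde{A}\circ\tilde{f}^k + g\circ\tilde{f}^k$ one obtains $g_{k+1} = \tilde{A}\circ g_k + g\circ\tilde{f}^k$, which is bounded because $g_k$ and $g$ are; the downward step is identical with $h$ and $\tilde{A}^{-1}$ in the roles of $g$ and $\tilde{A}$. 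By linearity of $\tilde{A}$ this gives, for all $x,y \in \mathbb{R}^3$,
$$ || (\tilde{f}^k(x) - \tilde{f}^k(y)) - (\tilde{A}^k(x) - \tilde{A}^k(y)) || = ||g_k(x) - g_k(y)|| \leq 2D_k . $$

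Finally, since $A$ is an automorphism of $\mathbb{T}^3$, the matrix $\tilde{A}^k$ is a linear isomorphism for every $k$, so $||\tilde{A}^k(x) - \tilde{A}^k(y)|| = ||\tilde{A}^k(x-y)|| \geq c_k ||x-y||$ where $c_k := ||\tilde{A}^{-k}||^{-1} > 0$. Dividing the displayed inequality by $||\tilde{A}^k(x) - \tilde{A}^k(y)||$, which is positive whenever $x \neq y$, we get
$$ \left| \frac{||\tilde{f}^k(x) - \tilde{f}^k(y)||}{||\tilde{A}^k(x) - \tilde{A}^k(y)||} - 1 \right| \leq \frac{2D_k}{c_k\,||x-y||} . $$
Thus, given $C > 1$, any $M$ with $M > 2D_k\,\big(c_k(1-C^{-1})\big)^{-1}$ works (and such an $M$ is automatically positive): for $||x-y|| \geq M$ the right-hand side is strictly smaller than $1 - C^{-1}$, which places the ratio in $(C^{-1},\, 2-C^{-1})$, and $2 - C^{-1} < C$ since $C + C^{-1} > 2$ for $C > 1$. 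I do not expect a genuine obstacle here; the only point needing attention is the boundedness of $g_k$ for \emph{negative} $k$, which is precisely why one brings in the lift of $f^{-1}$ rather than trying to invert the estimate for positive $k$ directly. It is worth noting that partial hyperbolicity is not actually used in this lemma: only that $f$ is homotopic to an invertible linear automorphism $A$.
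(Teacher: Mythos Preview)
Your argument is correct. The key observation --- that any lift $\tilde f$ of a torus diffeomorphism homotopic to the linear map $A$ satisfies $\tilde f = \tilde A + g$ with $g$ bounded (because $g$ is $\mathbb{Z}^3$-periodic and continuous), and that this persists under iteration --- is exactly what drives the estimate, and your use of the reverse triangle inequality together with the lower bound $\|\tilde A^k v\| \ge c_k\|v\|$ is clean. The final inequality $2 - C^{-1} < C$ for $C>1$ is fine, and your remark that partial hyperbolicity plays no role here is well taken: the statement only needs $f$ to be a diffeomorphism homotopic to a linear isomorphism (which the linearization of any torus diffeomorphism automatically is, since the induced map on $\pi_1(\mathbb T^3)\cong\mathbb Z^3$ must be invertible).

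As for comparison with the paper: the paper does \emph{not} give its own proof of this lemma. It is quoted directly from Hammerlindl \cite{H} and used as a black box (together with Lemma~\ref{lemmaMT}) in the proof of Lemma~\ref{joint} and Theorem~A. So there is nothing in the paper to compare your argument against; your write-up simply supplies a self-contained elementary proof of a result the paper imports.
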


\begin{lemma}\label{lemmaMT}\cite{MT} Let $f: \mathbb{T}^3 \rightarrow \mathbb{T}^3$ be a partially hyperbolic diffeomorphism with linearization  $A.$ For each $n \in \mathbb{Z}$ and $\varepsilon > 0 $ there exists $M > 0$ such that for every $x, y $  in the same lifted leaf of $\tilde{\mathcal{F}}^{\sigma}, \sigma \in \{s,c,u\}$ we have

$$ ||x - y|| \geq M \Rightarrow (1 + \varepsilon)^{-1} e^{n \lambda^{\sigma}_A} ||x- y|| \leq || \tilde{A}^n(x) - \tilde{A}^n(y)|| \leq (1 + \varepsilon) e^{n \lambda^{\sigma}_A}||x- y||, $$

where $\lambda^{\sigma}_A$ is the Lyapunov exponent of corresponding to $E^{\sigma}_A$ and $\sigma \in \{s,c,u\}.$

\end{lemma}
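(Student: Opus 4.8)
The plan is to reduce the statement to an elementary estimate about the linear map $\tilde A$ acting on vectors nearly parallel to an eigenline, the only substantial ingredient being the large‑scale rigidity of the invariant foliations of $f$ recalled just above. First I would record the structure of $A$. Since $f$ is absolute partially hyperbolic with $\dim E^s_f=\dim E^c_f=\dim E^u_f=1$, its linearization $A=f_\ast$ is partially hyperbolic with one–dimensional invariant bundles (by \cite{H}); being linear it is automatically absolute partially hyperbolic, so $E^s_A,E^c_A,E^u_A$ are $A$–invariant lines through the origin, i.e. real eigenlines, with eigenvalues $\mu_s,\mu_c,\mu_u$ satisfying $0<|\mu_s|<|\mu_c|<|\mu_u|$, $|\mu_s|<1<|\mu_u|$, and $\lambda^\sigma_A=\log|\mu_\sigma|$. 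Taking the lift $\tilde A:\mathbb{R}^3\to\mathbb{R}^3$ to be the linear map itself, one has $\tilde A^n(x)-\tilde A^n(y)=A^n(x-y)$ for all $x,y$ and all $n\in\mathbb{Z}$, and, since $A$ acts on the line $E^\sigma_A$ as multiplication by $\mu_\sigma$, one has $\|A^n z\|=|\mu_\sigma|^{n}\,\|z\|=e^{n\lambda^\sigma_A}\|z\|$ for every $z\in E^\sigma_A$ and every $n\in\mathbb{Z}$ (the signs of the $\mu_\sigma$ and the sign of $n$ play no role here).

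The geometric input is the following: by the quasi‑isometry of $\mathcal{F}^s_f,\mathcal{F}^c_f,\mathcal{F}^u_f$ (the theorem of Brin–Burago–Ivanov and Hammerlindl stated above) and the resulting global product structure, there is a constant $R>0$ such that, for each $\sigma$, every lifted leaf of $\tilde{\mathcal{F}}^\sigma_f$ lies within Hausdorff distance $R$ of the affine line $z+E^\sigma_A$ through any of its points $z$ (see \cite{H},\cite{BBI}). In particular, if $x,y$ lie in the same lifted leaf of $\tilde{\mathcal{F}}^\sigma_f$, we may write $x-y=v+w$ with $v\in E^\sigma_A$ and $\|w\|\le R$, whence $\bigl|\,\|v\|-\|x-y\|\,\bigr|\le\|w\|\le R$. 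I expect this to be the only nonroutine step; the remainder is the triangle inequality.

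Finally I would carry out the estimate. With $x,y$ as above, $\tilde A^n(x)-\tilde A^n(y)=A^n(x-y)=\mu_\sigma^{n}v+A^n w$, so $\|A^n w\|\le\|A^n\|\,R$ (operator norm) while $\|\mu_\sigma^{n}v\|=e^{n\lambda^\sigma_A}\|v\|$; combining with the previous paragraph and the triangle inequality gives
\[
\Bigl|\;\|\tilde A^n(x)-\tilde A^n(y)\|-e^{n\lambda^\sigma_A}\|x-y\|\;\Bigr|\;\le\;R\bigl(e^{n\lambda^\sigma_A}+\|A^n\|\bigr)\;=:\;D_{n,\sigma}.
\]
Given $\varepsilon>0$, set $M:=\tfrac{1+\varepsilon}{\varepsilon}\,\max_{\sigma\in\{s,c,u\}}D_{n,\sigma}\,e^{-n\lambda^\sigma_A}$, which depends only on $n$ and $\varepsilon$ (with $R$ and $A$ fixed by $f$). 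If $\|x-y\|\ge M$ then $D_{n,\sigma}\le\tfrac{\varepsilon}{1+\varepsilon}e^{n\lambda^\sigma_A}\|x-y\|$, and therefore
\[
\|\tilde A^n(x)-\tilde A^n(y)\|\le\Bigl(1+\tfrac{\varepsilon}{1+\varepsilon}\Bigr)e^{n\lambda^\sigma_A}\|x-y\|\le(1+\varepsilon)\,e^{n\lambda^\sigma_A}\|x-y\|,
\]
using $1+\tfrac{\varepsilon}{1+\varepsilon}\le(1+\varepsilon)$, and likewise $\|\tilde A^n(x)-\tilde A^n(y)\|\ge\bigl(1-\tfrac{\varepsilon}{1+\varepsilon}\bigr)e^{n\lambda^\sigma_A}\|x-y\|=(1+\varepsilon)^{-1}e^{n\lambda^\sigma_A}\|x-y\|$, which is the asserted inequality; the same $M$ works simultaneously for $\sigma=s,c,u$.

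The genuine obstacle is the geometric input of the second paragraph: that the non‑compact leaves of the center foliation of $f$ — a priori only a topological object — stay within bounded distance of straight lines in the cover. Once that is granted, the lemma is a one‑line perturbation of the exact identity $\|A^n z\|=e^{n\lambda^\sigma_A}\|z\|$ valid on the eigenlines of $A$.
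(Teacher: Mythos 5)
Your proof is correct, and since the paper itself states this lemma as an imported result from \cite{MT} without giving a proof, there is nothing internal to compare it against; your route (leaves of $\tilde{\mathcal{F}}^\sigma_f$ at bounded Hausdorff distance from translates of the eigenline $E^\sigma_A$, exact scaling $\|A^n z\|=e^{n\lambda^\sigma_A}\|z\|$ on that eigenline, then a triangle-inequality perturbation absorbed by taking $\|x-y\|$ large) is essentially the standard argument behind the cited lemma. One small point of attribution: the key geometric input is not a formal consequence of quasi-isometry alone, as your phrasing suggests, but is Hammerlindl's ``almost parallel'' property of the lifted invariant foliations versus the linear ones (proved in \cite{H}, building on \cite{BBI}); since you invoke \cite{H} for it, the step is legitimate, but it should be quoted as that theorem rather than as a corollary of quasi-isometry. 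A second, minor point worth a sentence in a write-up: the eigenvalues of $A$ are real with three distinct moduli because $A$, being the linearization of an (absolute) partially hyperbolic diffeomorphism of $\mathbb{T}^3$, admits a $Df$-invariant splitting into three one-dimensional subspaces, which justifies the identity $\lambda^\sigma_A=\log|\mu_\sigma|$ and the exact scaling on $E^\sigma_A$ that your estimate rests on.
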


Combining the previous lemmas we can state.

\begin{lemma}\label{joint} Let $f: \mathbb{T}^3 \rightarrow  \mathbb{T}^3 $ be a DA diffeomorphism, such that $ f_{\ast} = A$ and $\lambda^c_A > 0.$ Given  $k = 1,$ and $C = (1+ \varepsilon),$ for a small $\varepsilon > 0,$ consider $M > 0 $ satisfying the lemmas \ref{lemmaH} and \ref{lemmaMT}. If $x, y \in \mathbb{R}^3$ on the same center leaf of $\tilde{f}$, such that $||x-y|| > M,$ then

$$ || \tilde{f}^n(x) - \tilde{f}^n(y)|| \leq (1 + \varepsilon)^{2n} e^{n\lambda^c_A} ||x - y||, \; \mbox{for all} \; n \geq 1.$$
\end{lemma}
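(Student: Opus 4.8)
The plan is to chain the two preceding lemmas together along the orbit, being careful that at each iterate the pair of points is still far enough apart (distance $> M$) for both lemmas to apply. First I would fix $k=1$ and $C = 1+\varepsilon$ and choose $M>0$ simultaneously valid for Lemma~\ref{lemmaH} (with this $k$ and $C$) and for Lemma~\ref{lemmaMT} (with $n=1$, the same $\varepsilon$, and $\sigma = c$); enlarging $M$ if necessary, we may assume $M$ works for both. The key observation is that because $\lambda^c_A > 0$, the linear map $\tilde A$ expands on center leaves: if $x,y$ lie on a common center leaf of $\tilde f$ — hence, via the conjugacy, their images under the semiconjugacy lie on a common center leaf of $\tilde A$ — with $\|x-y\| > M$, then $\|\tilde A(x) - \tilde A(y)\| \geq (1+\varepsilon)^{-1} e^{\lambda^c_A}\|x-y\| > M$ provided $e^{\lambda^c_A} > 1+\varepsilon$, which holds for $\varepsilon$ small. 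Combined with Lemma~\ref{lemmaH} at $k=1$, this gives $\|\tilde f(x) - \tilde f(y)\| \geq C^{-1}\|\tilde A(x)-\tilde A(y)\| > M$ as well, so the hypothesis $\|\cdot\| > M$ is preserved under one step of $\tilde f$ along center leaves. (One should note here that $\tilde f$ maps center leaves to center leaves, so $\tilde f^j(x), \tilde f^j(y)$ stay on a common center leaf for all $j$.)

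With the distance-$>M$ condition propagating, I would argue by induction on $n$. For the single step, Lemma~\ref{lemmaH} with $k=1$ gives $\|\tilde f(x)-\tilde f(y)\| \leq C\,\|\tilde A(x)-\tilde A(y)\|$, and Lemma~\ref{lemmaMT} with $n=1$, $\sigma=c$ gives $\|\tilde A(x)-\tilde A(y)\| \leq (1+\varepsilon)e^{\lambda^c_A}\|x-y\|$; multiplying, $\|\tilde f(x)-\tilde f(y)\| \leq (1+\varepsilon)^2 e^{\lambda^c_A}\|x-y\|$, which is the $n=1$ case. For the inductive step, apply the $n=1$ bound to the pair $\tilde f^{n-1}(x), \tilde f^{n-1}(y)$ — legitimate because, as just established, these are still on a common center leaf at distance $>M$ — to get $\|\tilde f^{n}(x)-\tilde f^{n}(y)\| \leq (1+\varepsilon)^2 e^{\lambda^c_A}\,\|\tilde f^{n-1}(x)-\tilde f^{n-1}(y)\|$, and then invoke the induction hypothesis on the last factor. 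Telescoping the product yields $\|\tilde f^n(x)-\tilde f^n(y)\| \leq (1+\varepsilon)^{2n} e^{n\lambda^c_A}\|x-y\|$.

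The main obstacle, and the only real subtlety, is the bookkeeping in the previous paragraph: one needs to know that the iterated pairs never come closer than $M$, since both lemmas are only asymptotic statements valid at large scale. This is exactly what the positivity $\lambda^c_A > 0$ buys us — it forces $\tilde A$ (and hence, at large scale, $\tilde f$) to be strictly expanding along center directions — so I would isolate this monotonicity as the first step and make sure the choice of $\varepsilon$ is small enough that $(1+\varepsilon)^{-2} e^{\lambda^c_A} > 1$, which simultaneously guarantees the distance grows and keeps the constant $(1+\varepsilon)^{2n}e^{n\lambda^c_A}$ meaningful. Everything else is a routine two-line induction.
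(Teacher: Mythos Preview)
Your proposal is correct and follows essentially the same approach as the paper: induction on $n$, with the base case obtained by chaining Lemma~\ref{lemmaH} (at $k=1$) and Lemma~\ref{lemmaMT} (at $n=1$, $\sigma=c$), and the crucial observation that $\|\tilde f^j(x)-\tilde f^j(y)\| > M$ persists along the orbit because $(1+\varepsilon)^{-2}e^{\lambda^c_A} > 1$ for small $\varepsilon$. One small wording issue: in your first paragraph you write that $\|\tilde f(x)-\tilde f(y)\| \geq C^{-1}\|\tilde A(x)-\tilde A(y)\| > M$ follows from $\|\tilde A(x)-\tilde A(y)\| > M$, but $C^{-1}M < M$; what you really need (and state correctly in the final paragraph) is to chain all the way back to $\|x-y\|$ with the combined factor $(1+\varepsilon)^{-2}e^{\lambda^c_A} > 1$, exactly as the paper does.
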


\begin{proof} The proof is by induction on $n.$ If $n = 1 ,$ by the lemma $\ref{lemmaH}$ we have
$$||\tilde{f}(x) - \tilde{f}(y)|| \leq  (1 +  \varepsilon) \cdot||\tilde{A}(x) - \tilde{A}(y)||,$$

and by lemma \ref{lemmaMT}

$$||\tilde{A}(x) - \tilde{A}(y)|| \leq (1 + \varepsilon) e^{\lambda^c_A} ||x - y||$$

combining the two last expressions we have

$$||\tilde{f}(x) - \tilde{f}(y)|| \leq (1 + \varepsilon)^2 e^{\lambda^c_A} ||x - y||.$$

It is important to note that$||\tilde{f}(x) - \tilde{f}(y)|| \geq ||x - y|| \geq M.$ In fact

$$||\tilde{f}(x) - \tilde{f}(y)|| \geq
(1+ \varepsilon)^{-1} \cdot ||\tilde{A}x - \tilde{A}y||, \; \mbox{by lemma \ref{lemmaH} and} $$

$$||\tilde{A}x - \tilde{A}y|| \geq (1 + \varepsilon)^{-1}e ^{\lambda^c_A} \cdot ||x - y||,\;  \mbox{by lemma \ref{lemmaMT} },$$

thus

$$||\tilde{f}(x) - \tilde{f}(y)|| \geq (1 + \varepsilon)^2 e^{\lambda^c_A}||x - y||,$$

if $\varepsilon > 0 $ is enough small, we have $||\tilde{f}(x) - \tilde{f}(y)|| \geq ||x - y||.$  It allows us apply the argument (for $k = 1$) above to $\tilde{f}(x) , \tilde{f}(y)$ in the same center leaf in $\mathbb{R}^3.$

By induction suppose that $||\tilde{f}^k(x) - \tilde{f}^k(y)|| \leq (1 + \varepsilon)^{2k} e^{k\lambda^c_A} ||x - y||,$ for some $k \geq 1.$ Like described above $||\tilde{f}^k(x) - \tilde{f}^k(y)|| \geq M,$ then we apply the argument presented in the case $k = 1$ to the points $\tilde{f}^k(x) , \tilde{f}^k(y)$ in the same center leaf in $\mathbb{R}^3.$

Thus,

$$||\tilde{f}^{k+1}(x) - \tilde{f}^{k+1}(y)|| = ||\tilde{f}(\tilde{f}^k(x)) - \tilde{f}(\tilde{f}^k(y))|| \leq (1 + \varepsilon)^{2} e^{\lambda^c_A} || \tilde{f}^k(x) -\tilde{f}^k(y)|| $$

since

$$||\tilde{f}^k(x) - \tilde{f}^k(y)|| \leq (1 + \varepsilon)^{2k} e^{k\lambda^c_A} ||x - y||,$$

we have

$$||\tilde{f}^{k+ 1}(x) - \tilde{f}^{k+1}(y)|| \leq (1 + \varepsilon)^{2(k + 1)} e^{(k + 1)\lambda^c_A} ||x - y||,$$

the induction is completed.

\end{proof}

\subsection{Proof of the Theorem A}

\begin{proof}
Under the assumptions of the  Theorem A, we must to prove two facts:

\begin{enumerate}
\item $\lambda^c_f(x) > 0, x \in P$ with $m(P) > 0 \Rightarrow \lambda^c_A > 0,$
\item $ 0 \leq \lambda^c_f(x) \leq \lambda^c_A,$ for $m$ a.e. $x \in \mathbb{T}^3.$
\end{enumerate}

For each $\frac{1}{d} > 0,$ with $d \in \mathbb{N}$ consider the set

$$P_d := \left\{x \in P| \lambda^c_f(x) \geq \frac{1}{d} \right\}.$$

Since $P = \bigcup_{d = 1}^{+\infty}P_d,$ and $m(P) > 0,$ then there exists $d > 0$ such that $m(P_d ) > 0.$

Now, for each $n \in \mathbb{N}$ consider

$$P_{d, n} = \{ x \in P_d|\; ||Df^k(x)| E^c_f(x)|| \geq e^{\frac{k}{2d}}, \forall k \geq n\},$$

then, there exists $N > 1$ such that $m(P_{d,N}) > 0.$

For each $x \in \mathbb{T}^3,$ choose  $B_x$ be an $\mathcal{F}^c_f-$foliated box, such that $x$ lies in the interior of $B_x.$ Since $\mathbb{T}^3$ is compact, there are $x_1, \ldots, x_j$ such that $\{B_{x_i}\}_{i = 1}^j$ covers $\mathbb{T}^3.$ Thus $m(P_{d,N} \cap B_{x_i}) > 0$ for some $1 \leq i \leq j.$

Since $\mathcal{F}^c_f$ is an absolutely continuous foliation, there exists $p \in B_{x_i}$ such that  the component of $\mathcal{F}^c_f(p)$ in $B_{x_i}$ intersects $P_{d, N}$ in a positive riemannian measure set of the leaf. Denote this component by  the center segment $[a,b]^c.$

Suppose that $\alpha \cdot |[a, b]^c| = Leb_{\mathcal{F}^c_f(p)} ([a, b]^c\cap P_{d,N}), \alpha > 0,  $ then in the  lifting $[\tilde{a}, \tilde{b}]^c$ of $[a, b]^c$ we have

$$|\tilde{f}^n([\tilde{a}, \tilde{b}]^c)| \geq \alpha \cdot e^{\frac{n}{2d}} |[\tilde{a}, \tilde{b}]^c|, n \geq N, $$

where $|[\tilde{a}, \tilde{b}]^c|$ denotes the length of the segment $[\tilde{a}, \tilde{b}]^c . $  In particular $|\tilde{f}^n([\tilde{a}, \tilde{b}]^c)| \rightarrow +\infty,$

and since $\mathcal{F}^c_f$ is quasi-isometric, we have

\begin{equation}
|| \tilde{f}^n(\tilde{a}) - \tilde{f}^n(\tilde{b})  || \geq \frac{1}{Q}(|\tilde{f}^n([\tilde{a}, \tilde{b}]^c)| - Q) \rightarrow + \infty. \label{infty}
\end{equation}

Let $k , \varepsilon > 0,$ $C > 0$ and $M > 0$ as in the lemmas $\ref{lemmaH}$ and $\ref{lemmaMT}$ and consider  $x_n, y_n$ the extremes of $\tilde{f}^n([a, b]^c),$  by quasi isometry of $\mathcal{F}^c_f$ there exists $n_0$ such that  if $n \geq n_0$ then $||x_n - y_n || > M.$ Then combining the lemmas lemmas $\ref{lemmaH}$ and $\ref{lemmaMT}$,  we have

$$(C(1 + \varepsilon))^{-1}\leq \frac{||\tilde{f}^{k}(x_n ) - \tilde{f}^{k}(y_n )||}{ e^{k \lambda^c_A}  ||x_n - y_n|| } \leq C(1 + \varepsilon),$$

by the equation $(\ref{infty})$  we have $||\tilde{f}^{k}(x_n ) - \tilde{f}^{k}(y_n )|| \rightarrow +\infty ,$ follows that $e^{\lambda^c_A} > 1,$ Thus $\lambda^c_A > 0.$

It remains to prove that $0 \leq \lambda^c_f \leq \lambda^c_A.$

Suppose by contradiction that $\lambda^c_f(x) > \lambda^c_A$ on a Lebesgue positive set $\Lambda$ and $\mathcal{F}^c_f$ is absolutely continuous.  Choose a small $\delta > 0$ and $\varepsilon > 0$ such that

$$  m(\{x \in \Lambda | \; e^{\lambda^c_f(x)} \geq  (1 + 10\varepsilon)^2 e^{\lambda^c_A + \delta} \})> 0.$$

Define

$$\Lambda_{\delta, d} = \{x \in \Lambda| \; ||Df^n(x)| E^c_f(x)|| \geq (1+ 10\varepsilon)^2 e^{\lambda^c_A + \delta}, \forall n \geq d\}.$$

Since $m(\{x \in \Lambda | \; e^{\lambda^c_f(x)} \geq  (1 + 10\varepsilon)^2 e^{\lambda^c_A + \delta}\}) > 0,$ then there is some $d$ such that $m(\Lambda_{\delta, d}) > 0.$

Now for each $x \in \mathbb{T}^3$ consider $B_x$ an open $\mathcal{F}^c_f-$foliated box, such that $x \in B_x$ and for each $y \in B_x,$ if $[a, b]^c$ is the center segment in $B_x$ containing $y,$ then its lifting denoted by $[\tilde{a}, \tilde{b}]^c $ is such that $||\tilde{a} - \tilde{b}|| > M .$  Where $M$ satisfies the lemma \ref{joint} with $C = (1 + \varepsilon)$ and $k= 1.$
By compactness of $\mathbb{T}^3,$ there are $B_{x_1}, \ldots, B_{x_j}$ a finite subcover of $\mathbb{T}^3.$ Since $\mathcal{F}^c_f$ is absolutely continuous, then there is $ 1 \leq i \leq j$ such that

$$Leb_{[a,b]^c} ( [a, b]^c \cap \Lambda_{\delta, d} ) > 0,$$

where $[a,b]^c$ is a center connected component in $B_{x_i}.$

Let $\alpha > 0$ be such that $Leb_{[a,b]^c} ( [a, b]^c \cap \Lambda_{\delta, d} ) = \alpha \cdot |[a, b]^c|.$

Thus, the length

$$|\tilde{f}^n([\tilde{a}, \tilde{b}]^c)| \geq \alpha \cdot (1 + 10 \varepsilon)^{2n}e^{n (\lambda^c_A + \delta)}|[\tilde{a}, \tilde{b}]^c|, \forall n \geq d.$$

particularly using quasi isometry of the foliation $\mathcal{F}^c_f$ we have

\begin{equation}\label{final}
||\tilde{f }^n(\tilde{a}) - \tilde{f }^n(\tilde{b})|| \geq \frac{\alpha}{Q} ((1 + 10 \varepsilon)^{2n}e^{n( \lambda^c_A + \delta)}|[\tilde{a}, \tilde{b}]^c| - Q).
\end{equation}

Applying the lemma \ref{joint} to $[\tilde{a}, \tilde{b}]^c,$ with $C  = ( 1 + \varepsilon ),$ we obtain

\begin{equation} \label{final2}
|| \tilde{f}^n(\tilde{a}) - \tilde{f}^n(\tilde{b})|| \leq (1 + \varepsilon)^{2n} e^{n\lambda^c_A} ||\tilde{a} - \tilde{b}||, \; \mbox{for all} \; n \geq 1.
\end{equation}

The inequalities $(\ref{final})$ and $(\ref{final2})$ contradict one each other, then $\mathcal{F}^c_f$ can not be absolutely continuous under the assumptions.

\end{proof}

\begin{corollary} Let $f: \mathbb{T}^3 \rightarrow \mathbb{T}^3 $ a $C^1-$volume preserving DA diffeomorphism. Suppose that there are  Lebesgue measurable sets $P, N,$ with $m(P)\cdot m(N) > 0,$ such that $\lambda^c_f(x) > 0$ for every $x \in P$ and $\lambda^c_f(x) < 0$ for every $x \in N,$ then $\mathcal{F}^c_f$ is not absolutely continuous.
\end{corollary}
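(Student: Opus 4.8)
The plan is to derive this corollary directly from Theorem A, which we are entitled to use. Theorem A tells us that if $f$ is an $m$-preserving DA diffeomorphism whose center foliation $\mathcal{F}^c_f$ is absolutely continuous, and if $\lambda^c_f(x) > 0$ on a positive volume set $P$, then in fact $0 \le \lambda^c_f(x) \le \lambda^c_A$ for $m$-almost every $x \in \mathbb{T}^3$. So the hypothesis $m(P) > 0$ with $\lambda^c_f > 0$ on $P$ already forces the almost-everywhere lower bound $\lambda^c_f(x) \ge 0$, contradicting the hypothesis that $\lambda^c_f(x) < 0$ on the positive volume set $N$.

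More precisely, I would argue by contradiction: suppose $\mathcal{F}^c_f$ is absolutely continuous. The hypotheses give sets $P, N$ with $m(P) > 0$, $m(N) > 0$, $\lambda^c_f > 0$ on $P$ and $\lambda^c_f < 0$ on $N$. Applying Theorem A with this $P$ (its hypotheses are exactly met: $f$ is an $m$-preserving DA diffeomorphism, $\mathcal{F}^c_f$ is absolutely continuous, and $\lambda^c_f > 0$ on a set of positive volume), we conclude $0 \le \lambda^c_f(x) \le \lambda^c_A$ for $m$-a.e.\ $x \in \mathbb{T}^3$. In particular $\lambda^c_f(x) \ge 0$ for $m$-a.e.\ $x$, so $m(\{x : \lambda^c_f(x) < 0\}) = 0$. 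But $N \subset \{x : \lambda^c_f(x) < 0\}$ and $m(N) > 0$, a contradiction. Hence $\mathcal{F}^c_f$ is not absolutely continuous.

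One small technical point to check is whether Theorem A as stated requires $C^{1+\alpha}$ or $C^2$ regularity (for the use of absolute continuity of $\mathcal{F}^s, \mathcal{F}^u$ in the Hopf-type arguments), whereas the corollary is stated for $C^1$ diffeomorphisms; since the quasi-isometry and Hartman–Grobman-type lemmas \ref{lemmaH}, \ref{lemmaMT}, \ref{joint} underlying Theorem A's proof only use $C^1$ and the linearization structure, the $C^1$ hypothesis in the corollary is consistent, so this is not really an obstacle. The only genuine content is recognizing that the corollary is the symmetric completion of Theorem A: Theorem A rules out $\lambda^c_f < 0$ on a positive set once $\lambda^c_f > 0$ somewhere (under absolute continuity), and the corollary simply packages both sign hypotheses together. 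There is no new estimate to prove — the main (and only) step is the invocation of Theorem A, so there is no serious obstacle here; it is a direct corollary.
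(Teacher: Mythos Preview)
Your proof is correct and is essentially the paper's argument: both assume absolute continuity and derive a contradiction directly from Theorem~A. The only cosmetic difference is that the paper contradicts the sign of $\lambda^c_A$ (obtaining $\lambda^c_A>0$ from $P$ and, by the symmetric statement applied to $f^{-1}$, $\lambda^c_A<0$ from $N$), whereas you use the a.e.\ lower bound $\lambda^c_f\ge 0$ from Theorem~A to contradict $m(N)>0$ directly.
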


\begin{proof} Suppose that $\mathcal{F}^c_f$ is absolutely continuous and there are Lebesgue measurable sets $P, N,$ with $m(P)\cdot m(N) > 0,$ such that $\lambda^c_f(x) > 0$ for every $x \in P$ and $\lambda^c_f(x) < 0$ for every $x \in N.$

$$\lambda^c_f(x) > 0 \; \mbox{ for every } \; \in P \Rightarrow \lambda^c_A > 0,$$

$$\lambda^c_f(x) < 0 \; \mbox{ for every } \; \in N \Rightarrow \lambda^c_A < 0.$$

The last implications above are contradictories, it concludes the proof.

\end{proof}

\begin{remark} The statement of the corollary above makes sense only in the $C^1$ setting, if $f$ is $C^r, r \geq 2$ the ergodicity in the theorem \ref{teoHU}  obstructs the existence of  Lebesgue measurable sets $P, N,$ with $m(P)\cdot m(N) > 0,$ such that $\lambda^c_f(x) > 0$ for every $x \in P$ and $\lambda^c_f(x) < 0$ for every $x \in N.$
\end{remark}

In \cite{U} the author shown:

\begin{theorem}\label{teoU} \cite{U} Let $f: \mathbb{T}^3 \rightarrow \mathbb{T}^3 $ a $C^{1+ \alpha}-$ DA diffeomorphism, homotopic to $A
,$ with $\lambda^c_A > 0.$ Then there is a unique maximizing entropy measure $\mu$ for $f,$ moreover $(f, \mu)$ and $(A, m)$ are isomorphic and  $\lambda^c_f(x) \geq \lambda^c_A,$ for $\mu$ almost everywhere $x \in  \mathbb{T}^3.$
\end{theorem}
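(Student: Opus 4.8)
The plan is to transport the whole problem onto the linear model $A$ through the canonical semiconjugacy and then read off all three assertions (existence and uniqueness of the m.m.e., the isomorphism with $(A,m)$, and the center exponent bound) from the fact that Haar measure $m$ is the unique measure of maximal entropy of the hyperbolic automorphism $A$, with $h_{\mathrm{top}}(A)=\lambda^c_A+\lambda^u_A=-\lambda^s_A$ (using $|\det A|=1$). First, the structure theory of partially hyperbolic diffeomorphisms of $\mathbb{T}^3$ (Franks--Manning theory, \cite{BBI}, \cite{H}) provides a continuous surjection $H\colon\mathbb{T}^3\to\mathbb{T}^3$ homotopic to the identity with $H\circ f=A\circ H$; lifting to $\mathbb{R}^3$, $\tilde H$ stays within bounded distance of the identity, so each fiber $H^{-1}(z)$ is compact, connected, of uniformly bounded Euclidean diameter, and $f$ permutes the fibers, $f(H^{-1}(z))=H^{-1}(Az)$. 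Since these fibers are arcs lying inside center leaves and $\mathcal{F}^c_f$ is quasi-isometric, they have uniformly bounded \emph{intrinsic} length, and as their forward and backward images remain uniformly bounded the Bowen fiber entropy $h(f,H^{-1}(z))$ vanishes for every $z$. The factor inequality then gives $h_{\mathrm{top}}(f)\ge h_{\mathrm{top}}(A)$, and Bowen's inequality $h_{\mathrm{top}}(f)\le h_{\mathrm{top}}(A)+\sup_z h(f,H^{-1}(z))$ the reverse, so $h_{\mathrm{top}}(f)=h_{\mathrm{top}}(A)$.

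Next I would prove the center exponent bound for an arbitrary \emph{ergodic} measure of maximal entropy $\mu$ of $f$ (such $\mu$ exist as extreme points of the nonempty compact convex set of m.m.e.\ of $f$). For ergodic $\mu$ the exponents $\lambda^s_f,\lambda^c_f,\lambda^u_f$ are $\mu$-a.e.\ constants and $h_\mu(f)=-\lambda^s_A$. Because $\mathcal{F}^u_f$ is quasi-isometric, Lemma~\ref{lemmaMT} with $\sigma=u$ (together with the bounded geometry of the leaves) shows that unstable segments grow under $f^n$ at exponential rate at most $\lambda^u_A$, so the unstable volume growth of $f$ is $\le\lambda^u_A$ and the part of $h_\mu(f)$ carried by $\mathcal{F}^u_f$ is $\le\lambda^u_A$. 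Inserting this into the Ledrappier--Young entropy formula (equivalently a Ruelle-type inequality relative to $E^u_f$) gives $h_\mu(f)\le\lambda^u_A+(\lambda^c_f)^+$. If $\lambda^c_f\le 0$ this reads $-\lambda^s_A\le\lambda^u_A<\lambda^c_A+\lambda^u_A=-\lambda^s_A$, impossible; hence $\lambda^c_f>0$ and $-\lambda^s_A\le\lambda^u_A+\lambda^c_f$, i.e.\ $\lambda^c_f\ge\lambda^c_A$. So $\lambda^c_f(x)\ge\lambda^c_A$ for $\mu$-a.e.\ $x$, for every ergodic m.m.e.\ $\mu$.

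It remains to obtain uniqueness of the m.m.e.\ and the isomorphism with $(A,m)$. For any m.m.e.\ $\mu$ the Ledrappier--Walters relative variational principle together with the vanishing fiber entropy gives $h_{H_*\mu}(A)\ge h_\mu(f)=h_{\mathrm{top}}(A)$, so $H_*\mu=m$ by intrinsic ergodicity of $A$. Now set $D=\{z:\ H^{-1}(z)\text{ is not a single point}\}$; it is $A$-invariant, hence $m(D)\in\{0,1\}$. If $m(D)=1$, take an ergodic m.m.e.\ $\mu$: then $\mu(H^{-1}(D))=m(D)=1$, so $\mu$-a.e.\ point lies in a non-degenerate center arc $H^{-1}(z)$ of length $\le L_0$ that keeps length $\le L_0$ under every $f^n$ (being mapped to $H^{-1}(A^nz)$); combined with $\lambda^c_f>0$ $\mu$-a.e.\ and Poincar\'e recurrence for $(A,m)$ one forces the lengths of these arcs to grow without bound along a recurrent subsequence, a contradiction. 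Hence $m(D)=0$: off the $m$-null set $D$ the map $H$ restricts to a Borel conjugacy between $f$ and $A$, so there is exactly one $f$-invariant probability projecting to $m$ under $H$, namely the pullback of $m$; since every m.m.e.\ projects to $m$ it equals this measure, so the m.m.e.\ $\mu$ is unique, $(f,\mu)\cong(A,m)$, $\mu$ is ergodic, and the previous paragraph applies to it.

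The main obstacle is the statement $m(D)=0$, that $m$-almost every fiber of $H$ is a single point: this single fact is what turns $H_*\mu=m$ into a genuine measure isomorphism and simultaneously delivers uniqueness of the m.m.e., and it is precisely where one must control the collapsing behaviour of the semiconjugacy — showing that the non-degenerate fibers form a ``small'' family of center arcs and that $\lambda^c_f>0$ genuinely forces these arcs to expand along recurrent orbits. By comparison the topological entropy computation and the center exponent inequality are routine once the quasi-isometry Lemmas~\ref{lemmaH}--\ref{lemmaMT}, Bowen's and Ledrappier--Walters' inequalities, and the Ledrappier--Young formula are in hand.
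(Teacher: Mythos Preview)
This theorem is quoted from \cite{U} and the present paper gives no proof of it, so there is nothing in the paper to compare your argument against; I therefore assess your sketch on its own merits and against the known argument of Ures.

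Your overall architecture is the right one and coincides with Ures' approach: the Franks semiconjugacy $H$, the equality $h_{\mathrm{top}}(f)=h_{\mathrm{top}}(A)$ via Bowen's bounded-fiber inequality, the identification $H_*\mu=m$ for any m.m.e.\ $\mu$ via Ledrappier--Walters, and the bound $\lambda^c_f\ge\lambda^c_A$ from the strong-unstable volume growth estimate (this is exactly how \cite{U} obtains the exponent inequality, and your use of Lemma~\ref{lemmaMT} here is appropriate). Up to this point the sketch is essentially complete.

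The genuine gap is precisely where you flag it: the claim $m(D)=0$. Your proposed mechanism --- ``$\lambda^c_f>0$ $\mu$-a.e.\ together with Poincar\'e recurrence forces the lengths of the arcs $H^{-1}(z)$ to grow'' --- does not work as stated. The center exponent is a $\mu$-a.e.\ statement about $\|Df^n|E^c\|$ at $\mu$-typical points, whereas the length of $f^n(H^{-1}(z))$ is the integral of $\|Df^n|E^c\|$ with respect to \emph{arc length} along $H^{-1}(z)$; the conditional measure $\mu_z$ may well be a Dirac mass inside a non-degenerate arc, so $\mu$-typicality gives you no control over Lebesgue-generic points of the arc, and nothing prevents $\|Df^n|E^c\|$ from being small on most of $H^{-1}(z)$ even though it is large at the single $\mu$-typical point. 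Poincar\'e recurrence for $(A,m)$ only tells you that $\ell(A^{n_k}z)$ is comparable to $\ell(z)$ along a subsequence, which is perfectly compatible with all lengths staying below $L_0$.

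What actually closes the argument in \cite{U} is different: one first uses $\lambda^c_f>0$ to run a Ruelle--Wilkinson/Pesin type argument on the bounded $H$-fibers (a skew product over $(A,m)$ with one-dimensional fibers of uniformly bounded length and positive fiber exponent) to conclude that the conditionals $\mu_z$ are Dirac for $m$-a.e.\ $z$; this already gives the isomorphism $(f,\mu)\cong(A,m)$ for each ergodic m.m.e.\ $\mu$. Uniqueness then follows not from $m(D)=0$ as a set-theoretic statement, but from the fact that the atom of $\mu_z$ is pinned down dynamically (it is the unique point of $H^{-1}(z)$ whose forward center orbit shadows the linear one at the correct exponential rate), so two ergodic m.m.e.'s must have the same conditionals and hence coincide. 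If you rewrite your third paragraph along these lines --- atomic conditionals via the invariant-section/Pesin argument rather than an arc-growth contradiction --- the proof goes through.
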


Relying in the previous theorem we can prove the following corollary.

\begin{corollary} Let $f: \mathbb{T}^3 \rightarrow \mathbb{T}^3 $ a  $C^{1+ \alpha}-$ volume preserving DA diffeomorphism, homotopic to $A,$ with $\lambda^c_A > 0.$ Suppose that $m$ is the maximal entropy measure and $\mathcal{F}^c_f$ is absolutely continuous, then $\lambda^{\sigma}_f(x) = \lambda^{\sigma}_A, \sigma \in \{s,c,u\}$ for $m$ almost everywhere $x \in \mathbb{T}^3.$
\end{corollary}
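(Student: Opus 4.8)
The plan is to combine Theorem A with Theorem \ref{teoU} to pin down all three Lyapunov exponents. First I would apply Theorem \ref{teoU}: since $f$ is a $C^{1+\alpha}$ DA diffeomorphism homotopic to $A$ with $\lambda^c_A>0$, there is a unique measure of maximal entropy $\mu$, the systems $(f,\mu)$ and $(A,m)$ are isomorphic, and $\lambda^c_f(x)\geq\lambda^c_A$ for $\mu$-a.e.\ $x$. By hypothesis $m$ itself is the measure of maximal entropy, so $\mu=m$ and therefore $\lambda^c_f(x)\geq\lambda^c_A$ for $m$-a.e.\ $x\in\mathbb{T}^3$. In particular the set $P=\{x:\lambda^c_f(x)>0\}$ (which contains $\{x:\lambda^c_f(x)\geq\lambda^c_A\}$) has full, hence positive, $m$-measure.

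Next I would invoke Theorem A. Its hypotheses are now met: $f$ is an $m$-preserving DA diffeomorphism with linearization $A$, the set $P$ with $\lambda^c_f>0$ has positive measure, and $\mathcal{F}^c_f$ is absolutely continuous. Theorem A then yields $0\leq\lambda^c_f(x)\leq\lambda^c_A$ for $m$-a.e.\ $x$. Combined with the reverse inequality $\lambda^c_f(x)\geq\lambda^c_A$ from the previous paragraph, this forces $\lambda^c_f(x)=\lambda^c_A$ for $m$-a.e.\ $x\in\mathbb{T}^3$.

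It remains to handle the stable and unstable exponents. Here I would use that $(f,m)$ is now metrically isomorphic to $(A,m)$ via the conjugacy coming from Theorem \ref{teoU}, so the entropies agree: $h_m(f)=h_m(A)$. Since $m$ is $f$-invariant and $f$ is a volume-preserving diffeomorphism of $\mathbb{T}^3$, the sum of all Lyapunov exponents is zero, $\lambda^s_f(x)+\lambda^c_f(x)+\lambda^u_f(x)=0$ a.e., and likewise $\lambda^s_A+\lambda^c_A+\lambda^u_A=0$; subtracting and using $\lambda^c_f=\lambda^c_A$ a.e.\ gives $\lambda^s_f(x)+\lambda^u_f(x)=\lambda^s_A+\lambda^u_A$ a.e. To split this single relation into the two equalities $\lambda^u_f=\lambda^u_A$ and $\lambda^s_f=\lambda^s_A$, I would apply the Pesin entropy formula (valid in the $C^{1+\alpha}$ conservative setting): $h_m(f)=\int \lambda^u_f\,dm$ and $h_m(A)=\lambda^u_A$, so $\int\lambda^u_f\,dm=\lambda^u_A$. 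One inequality, $\lambda^u_f(x)\leq\lambda^u_A$ for a.e.\ $x$, should follow from the large-scale geometric/quasi-isometry estimates of Lemmas \ref{lemmaH}–\ref{joint} applied to unstable leaves exactly as in the proof of Theorem A (unstable leaves of $f$ grow at most like those of $A$), and together with $\int\lambda^u_f\,dm=\lambda^u_A$ this gives $\lambda^u_f(x)=\lambda^u_A$ a.e.; then $\lambda^s_f(x)=\lambda^s_A$ a.e.\ follows from the zero-sum relation. This yields $\lambda^\sigma_f(x)=\lambda^\sigma_A$ for $\sigma\in\{s,c,u\}$ and $m$-a.e.\ $x$, as claimed.

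The main obstacle I anticipate is the last step: getting the \emph{individual} equality $\lambda^u_f(x)=\lambda^u_A$ a.e.\ rather than merely the integral identity $\int\lambda^u_f\,dm=\lambda^u_A$. The cleanest route is the pointwise one-sided bound $\lambda^u_f(x)\leq\lambda^u_A$ from quasi-isometry of $\mathcal{F}^u_f$ (the analogue for unstable leaves of the argument already used for center leaves in Theorem A), which together with the integral identity forces a.e.\ equality; one must check that the quasi-isometry estimates of Hammerlindl do apply to the unstable foliation of a DA diffeomorphism on $\mathbb{T}^3$, which they do. An alternative, should that bound be delicate, is to note that $h_m(f)=h_m(A)$ combined with the Ledrappier–Young / Pesin formula and the already-established $\lambda^c_f=\lambda^c_A$ a.e.\ constrains the remaining exponents enough, but I would present the quasi-isometry argument as the primary one since it parallels the rest of the paper.
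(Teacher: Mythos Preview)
Your argument follows the paper's proof almost exactly: Ures' theorem together with Theorem~A force $\lambda^c_f=\lambda^c_A$ a.e., and then equality of entropies, Pesin's formula, and the one-sided bound $\lambda^u_f\le\lambda^u_A$ yield $\lambda^u_f=\lambda^u_A$ a.e., with the stable exponent following. Two small points. First, your statement of Pesin's formula drops the positive center contribution: since $\lambda^c_f>0$ a.e., one has $h_m(f)=\int(\lambda^c_f+\lambda^u_f)\,dm$ and $h_m(A)=\lambda^c_A+\lambda^u_A$, and it is the already-established equality $\lambda^c_f=\lambda^c_A$ that lets these cancel to give $\int\lambda^u_f\,dm=\lambda^u_A$; the conclusion you reach is correct, but the intermediate formulas as written are not. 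Second, the pointwise bound $\lambda^u_f(x)\le\lambda^u_A$ that you flag as the ``main obstacle'' is not re-proved in the paper via the quasi-isometry lemmas but simply quoted from \cite{MT}, so no extra work is needed there.
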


\begin{proof} By the  theorem \ref{teoU} we have $\lambda^c_f(x) \geq \lambda^c_A > 0$ for $m-$ almost everywhere $x \in \mathbb{T}^3.$ On the other hand, since $\lambda^c_f(x ) > 0$ for $m$ almost everywhere $x \in \mathbb{T}^3$ and $\mathcal{F}^c_f$ is absolutely continuous,  then  by  Theorem A we have  $\lambda^c_f \leq \lambda^c_A $  for almost everywhere $x \in \mathbb{T}^3.$ So $\lambda^c_f(x) = \lambda^c_A$  for almost everywhere $x \in \mathbb{T}^3.$

By the theorem $\ref{teoU}$ the systems $(f, m)$ and $(A,m)$ are isomorphic, then

$$h_m (f) = h_m (A) = \lambda^c_A + \lambda^c_A,$$

by Pesin's formula

$$ h_m(f) = \lambda^c_A + \int_{\mathbb{T}^3} \lambda^u_f dm =  \lambda^c_A + \lambda^c_A = h_m(A), $$

since, by \cite{MT} we have $\lambda^u_f(x) \leq \lambda^u_A,$ for for $m$ almost everywhere $x \in \mathbb{T}^3,$ it jointly with the expression above imply  that $m(\{x \in \mathbb{T}^3| \; \lambda^u_f(x) < \lambda^u_A\}) = 0.$ Then $\lambda^u_f(x) = \lambda^u_A,$ for $m$ almost everywhere $x \in \mathbb{T}^3.$ Consequently $\lambda^s_f(x) = \lambda^s_A$ for $m$ almost everywhere $x \in \mathbb{T}^3.$

\end{proof}

\section{Proof of Theorem B}

For to give the proof of Theorem B,
firstly we construct examples of $f \in  DA_m(\mathbb{T}^3)\setminus \overline{\mathcal{A}}(\mathbb{T}^3)$, such that $\lambda^c_f > \lambda^c_A > 0$ for
$m-$ a.e.,  where $A$ is the linearization of $f.$
The open sets required will be neighborhoods of these
examples or neighborhoods of perturbations of the constructed examples. For
the construction we need recall some results.

\begin{proposition}\label{franks}[Conservative Franks Lemma, proposition 7.4 of \cite{BDP}] Consider a conservative diffeomorphism $f$ and a finite $f-$ invariant set
$E.$ Assume that $B$ is a conservative $\varepsilon-$ perturbation of $Df$ along $E.$ Then for every
neighbourhood $V$ of $E$ there is a conservative diffeomorphism $g$ arbitrarily $C^1-$ close to
$f$  coinciding with $f$ on $E$ and out of $V$, and such that $Dg$ is equal to $B$ on $E.$

\end{proposition}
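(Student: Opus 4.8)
The plan is to reduce the statement to a purely local realization problem and then solve the latter by inserting an explicit compactly supported conservative diffeomorphism near each point of $E$. Since $E$ is finite and $f$-invariant and the proposition only asks us to prescribe $Dg$ \emph{pointwise} on $E$ (the linear maps $B_x$, $x\in E$, being given, $\varepsilon$-close to $Df(x)$, and volume preserving), it suffices to work independently in charts around each $x\in E$ whose images are balls $B(0,r)$ with pairwise disjoint closures, all contained in $V$. Inside such a chart, precompose the coordinate map with a Moser diffeomorphism so that the chart-pushed-forward invariant volume becomes Lebesgue; then $f$, read in the charts at $x$ and at $f(x)$, is a Lebesgue-preserving local diffeomorphism with $f(0)=0$ and $Df(0)=A_x$ (assuming, as we may by splitting into components, that everything is orientation preserving, so $A_x,B_x\in SL(n,\mathbb{R})$). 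Put $L_x:=A_x^{-1}B_x\in SL(n,\mathbb{R})$; then $\|L_x-\mathrm{Id}\|\le\|A_x^{-1}\|\,\|B_x-A_x\|$, which is small. If we can produce a Lebesgue-preserving diffeomorphism $\Phi_x$ of $B(0,r)$, equal to the identity near $\partial B(0,r)$, $C^1$-close to the identity, with $\Phi_x(0)=0$ and $D\Phi_x(0)=L_x$, then the map $g$ defined by $g:=f\circ\Phi_x$ on $B(0,r)$ and $g:=f$ elsewhere is a well-defined $C^1$ diffeomorphism of the manifold, conservative, equal to $f$ off $V$ and on $E$, with $Dg(x)=A_xL_x=B_x$; and since the $C^1$-size of $g-f$ is controlled by that of $\Phi_x-\mathrm{Id}$ (times a constant depending only on $f$ and the fixed charts), it can be made as small as desired.

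Thus everything reduces to the following local lemma: \emph{given $L\in SL(n,\mathbb{R})$ close to $\mathrm{Id}$, construct a volume-preserving $\Phi$ supported in $B(0,1)$ (after rescaling we may take $r=1$), $C^1$-close to $\mathrm{Id}$, fixing $0$, with $D\Phi(0)=L$.} Write $L=\exp(X)$ with $X$ a traceless matrix, $\|X\|=O(\|L-\mathrm{Id}\|)$. The linear vector field $x\mapsto Xx$ has divergence $\mathrm{tr}\,X=0$, so its flow preserves Lebesgue; to localize it, pick a bump $\beta\colon\mathbb{R}^n\to[0,1]$ with $\beta\equiv1$ on $B(0,1/3)$ and $\beta\equiv0$ off $B(0,2/3)$, and set $Z_0(x):=\beta(x)\,Xx$. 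This no longer has zero divergence: $\mathrm{div}\,Z_0=\langle\nabla\beta,Xx\rangle=:-g$, a smooth compactly supported function with $\int g=0$ (divergence theorem, $\beta$ compactly supported, $\mathrm{tr}\,X=0$). Solve $\mathrm{div}\,Y=-g$ with $Y$ supported in $B(0,1)$ by the classical Bogovskii/Dacorogna solution operator, which gives $\|Y\|_{C^1}=O(\|g\|_{C^{0,\alpha}})=O(\|X\|)$, and since $g\equiv0$ on $B(0,1/3)$ one may take $Y\equiv0$ there. Then $Z:=Z_0+Y$ is divergence free, supported in $B(0,1)$, equals $Xx$ on $B(0,1/3)$, and $\|Z\|_{C^1}=O(\|X\|)$; let $\Phi$ be its time-one flow map. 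It is volume preserving, equals the identity off $B(0,1)$, fixes $0$, is $C^1$-close to $\mathrm{Id}$, and on a small enough ball around $0$ its trajectories remain in $B(0,1/3)$, where the flow is the linear flow $e^{tX}$, so there $\Phi=e^{X}$ and $D\Phi(0)=L$. This proves the lemma, and with it the proposition. (Alternatively one can avoid the divergence equation by factoring $L$ into a bounded number of elementary factors $\mathrm{Id}+t\,e_{ab}$, $a\neq b$, close to $\mathrm{Id}$, and realizing each by a ``shear'' diffeomorphism $x\mapsto x+t\,x_b\,\beta(\widehat{x})\,e_a$, volume preserving because the added component is independent of $x_a$; some care is then needed to trim the resulting non-compact slab support back inside $B(0,1)$.)

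The chart/Moser reduction and the bookkeeping along periodic orbits are routine. The step I expect to be the main obstacle is the local lemma itself: in the non-conservative Franks lemma a plain interpolation $g=f+\psi_r\cdot(B-A)x$ already does the job, whereas here the closed condition $\det D\Phi\equiv1$ is destroyed by naive interpolation, so one must either solve the divergence equation with genuine $C^1$-estimates or run the elementary-factor argument while simultaneously controlling the support and the $C^1$-norm. Securing that quantitative control --- so that $\|g-f\|_{C^1}\to0$ as $\varepsilon\to0$, with constants depending only on $f$ and the fixed charts --- is the technical heart of the argument.
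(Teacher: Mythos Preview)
This proposition is not proved in the paper at all: it is merely quoted, with attribution to \cite[Proposition~7.4]{BDP}, and then used as a black box in the construction for Theorem~B. There is therefore no argument in the paper to compare your proposal against.

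For what it is worth, your outline is a correct route to the conservative Franks lemma and is close to how such results are usually proved. Two small remarks. First, there is a sign slip: with your convention $\mathrm{div}\,Z_0=\langle\nabla\beta,Xx\rangle=:-g$ you need to solve $\mathrm{div}\,Y=g$ (not $-g$) so that $Z=Z_0+Y$ is divergence free. Second, the claim ``since $g\equiv0$ on $B(0,1/3)$ one may take $Y\equiv0$ there'' is not automatic if you run Bogovskii on the full ball; you should instead solve the divergence equation on the annulus $\{1/3\le|x|\le2/3\}$, which is a John domain so the Bogovskii-type operator with H\"older estimates is available, and then extend $Y$ by zero. With these cosmetic fixes your local lemma goes through, and the chart/Moser reduction and the bookkeeping along the finite orbit are indeed routine, yielding the proposition.
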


\begin{proposition}\label{babo}\cite{BB} Let $(M,m)$ be a compact manifold endowed with a $C^r$ volume
form,$r \geq 2.$ Let $f$  be a $C^1$ and $m-$ preserving diffeomorphisms of $M,$  admitting a dominated
partially hyperbolic splitting $TM= E^s
\oplus E^c \oplus E^u.$ Then there are arbitrarily $C^1-$close
and $m-$preserving perturbation $g$ of $f,$ such that

$$\displaystyle \int_M \log(||Dg|E^c_g||)dm > \displaystyle \int_M \log(||Df|E^c_f||)dm.$$

\end{proposition}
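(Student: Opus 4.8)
The plan is to follow \cite{BB}. One first reduces to the case $\dim E^c=1$, the only one needed here; the general case is the same, with the top Oseledets direction of $E^c$ replacing the axis $E^c_f$ and the operator norm replacing the single exponent. When $\dim E^c=1$ the cocycle $x\mapsto\log\|Dh|E^c_h(x)\|$ is additive along orbits, so Birkhoff's theorem gives $\int_M\log\|Dh|E^c_h\|\,dm=\int_M\lambda^c_h\,dm$ for every $m$-preserving $h$ with such a dominated splitting; it therefore suffices to produce, in every $C^1$-neighbourhood of $f$, a conservative $g$ with $\int_M\lambda^c_g\,dm>\int_M\lambda^c_f\,dm$. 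To build $g$, fix a non-periodic point $p$ at which the Oseledets splitting exists (such points have full measure), a small ball $B=B(p,r)$, and, in a chart around $p$, a divergence-free vector field $X$ supported in $B$ which near $p$ is the infinitesimal rotation in the plane $E^c_p\oplus E^u_p$ and vanishes on $E^s_p$; let $\phi_\theta$ be its time-$\theta$ flow and $g_\theta=\phi_\theta\circ f$. Then $\phi_\theta$ preserves $m$ (it is the time map of a divergence-free flow), is supported in $B$, and is $O(\theta)$-$C^1$-close to the identity, so $g_\theta$ preserves $m$, is $O(\theta)$-$C^1$-close to $f$, and stays (absolutely) partially hyperbolic with $E^c_{g_\theta}$ $C^0$-close to $E^c_f$.

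Next one shows that $\psi(\theta):=\int_M\lambda^c_{g_\theta}\,dm$ satisfies $\psi(\theta)>\psi(0)=\int_M\lambda^c_f\,dm$ for small $\theta\neq0$. In an orthonormal frame $\{E^c_f,E^u_f\}$ carried along an orbit, $E^c_{g_\theta}$ is the graph of a function $\delta_\theta=O(\theta)$ over $E^c_f$; invariance under $Dg_\theta$ forces $\delta_\theta$ to solve a backward-contracting cohomological equation, so it is given by a geometrically convergent series that is fed a fresh tilt $\sim\theta$ each time the orbit meets $B$, hence $\delta_\theta$ is not identically zero and is in fact nonzero on a set of positive measure. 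The gain comes from the elementary fact that a unit vector tilted from the slower axis $E^c_f$ towards the faster axis $E^u_f$ expands strictly more than $\|Df|E^c_f\|$: writing $Df|_{E^{cu}}$ in the adapted frames as an upper-triangular matrix with diagonal $(s,u)$, where $|s|<|u|$ by domination, and shear term $u'$, one has $|Df(1,\delta)|^2=(s+u'\delta)^2+u^2\delta^2$, which exceeds $s^2(1+\delta^2)$ up to a first-order term $2su'\delta$. Averaging over the orbit, this first-order contribution is a linear functional $\psi'(0)$ of $\theta$: if $\psi'(0)\neq0$ one chooses the sign of $\theta$; if $\psi'(0)=0$, the surviving quadratic term, with pointwise density at least $\tfrac12\bigl(\tfrac{u^2-s^2}{s^2}\bigr)\delta_\theta^2>0$ wherever $\delta_\theta\neq0$, gives $\psi''(0)>0$. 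Either way $\psi(\theta)>\psi(0)$ for small $\theta\neq0$, and $g_\theta\to f$ in $C^1$ as $\theta\to0$.

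The step I expect to be the main obstacle is the quantitative form of this second part for a genuinely nonlinear $f$: one must expand $\lambda^c_{g_\theta}$ to second order along orbits while carrying the adapted frames, control the shear term $u'$ and the $O(r)$ errors coming from the variation of $E^c_f$, $E^u_f$ and $Df$ across $B$, and verify that the accumulated tilt $\delta_\theta$ does not cancel against those errors --- all while keeping the perturbation divergence-free, $C^1$-small and supported near $p$. These estimates are carried out in \cite{BB}.
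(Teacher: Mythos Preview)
The paper does not prove this proposition at all: it is stated with the citation \cite{BB} and used as a black box in the construction that follows, with no argument given. There is therefore nothing in the paper to compare your sketch against.

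For what it is worth, your sketch is a fair outline of the Baraviera--Bonatti mechanism --- compose with a local divergence-free rotation mixing $E^c$ and $E^u$ so that the center direction borrows expansion from the unstable one --- and you correctly flag that the delicate part is the second-order expansion of the exponent along orbits while controlling the frame variation and shear. One small caveat: the original result in \cite{BB} is phrased in terms of the Jacobian $\log|\det(Df|E^c)|$ (the sum of center exponents), not the operator norm; the two coincide only when $\dim E^c=1$, which is the case actually used in this paper, so your reduction to that case is exactly right here. But since the paper itself offers no proof, the appropriate response in context is simply to cite \cite{BB}, as the author does.
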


For to begin the construction, for each $n \geq 1,$ as in \cite{PT}, we  consider $L_n: \mathbb{T}^3 \rightarrow \mathbb{T}^3 $ the Anosov automorphism the $\mathbb{T}^3$ induced by the matrices

$$L_n = \left[\begin{array}{ccc}
0 & 0 & 1\\
0 & 1 & -1\\
-1 & -1 & n
\end{array}\right].$$

We go to considerate the linear automorphism induced by $B_n = L_n^{-1}.$ By \cite{PT} $B_n$ have three distinct  eigenvalues $\beta^u(n), \beta^c(n), \beta^s(n),$ satisfying

\begin{equation}
\frac{\beta^u(n)}{n} \rightarrow 1, \label{unstable}
\end{equation}

\begin{equation}
\beta^c(n) \rightarrow 1^+, \label{center}
\end{equation}

\begin{equation}
n \cdot \beta^s(n) \rightarrow 1, \label{stable}
\end{equation}

calling $E^u_n, E^c_n, E^s_n$ respectively the eigenspaces corresponding to $\beta^u(n), \beta^c(n), \beta^s(n),$ by \cite{PT} we have:

\begin{equation}
 E^u_n \rightarrow \langle 1,0,0\rangle,\label{unstable1}
\end{equation}

\begin{equation}
E^c_n \rightarrow \langle 0,1,0\rangle, \label{center1}
\end{equation}

\begin{equation}
E^s_n \rightarrow \langle 0,0,1\rangle, \label{stable1}
\end{equation}

 where $\langle v\rangle$ denotes de real subspace spanned by $v.$ We go to denote $\beta^c(n) = 1 + \alpha_n,$ such that $\alpha_n \downarrow 0. $ The center exponent $\lambda^c_n : = \lambda^c_{B_n} = \log( 1 + \alpha_n) \downarrow 0.$

By the proposition \ref{babo}, and the structural stability of Anosov diffeomorphisms, consider $g_n$ a small perturbation of $L_n,$ an Anosov diffeomorphism such that $B_n$ and $g_n$ are $\alpha_n$ close in the $C^1$ topology and

\begin{equation}
\log(1 + \alpha_n) = \lambda^c_n < \displaystyle\int_{\mathbb{T}^3} \log (||Dg_n| E^c_{g_n}||) dm < \log(1 + 2\alpha_n).
\end{equation}

Let $p_n$ be a fixed point for $g_n.$ Consider a system $\{V_{n_j}\}_{j = 0}^{+\infty}$ of  small open balls centered in $p_n.$ The neighborhoods $V_{n_j} $ are constructed after. Using the proposition \ref{franks} we can obtain a $C^1-$perturbation $g_{n_j}$ of $g_n$ satisfying:

\begin{enumerate}
\item $p_n$ is a fixed point for $g_{n_j},$
\item $g_{n_j} = g_n$ out of $V_{n_j},$
\item $Dg_{n_j}(p_n)| E^u_{g_{n_j}}(p_n) =  Dg_{n}(p_n)| E^u_{g_{n}}(p_n),$
\item $Dg_{n_j}(p_n) $ and $Dg_n(p_n)$ have the same eigenspaces and the same orientation on corresponding eigenspaces,
\item $|| Dg_{n_j}(p_n)| E^c_{g_{n_j}}(p_n)|| = 1 - \alpha_n,$
\item $Dg_{n_j}(p_n)| E^s_{g_{n_j}}(p_n)$ is taken coherently with $g_{n_j}$ being $m-$ preserving.
\end{enumerate}

\subsection{The chosen of the open system $\{V_{n_j}\}_{j = 1}^{+\infty}$}\label{neigh}  Fixed $\varepsilon > 0  $ a small number and $V_{n_0} = B(p_n, \varepsilon)$ the open ball centered in $p_n$ with radius $\varepsilon.$ Since $g_n$ is Anosov, it  is possible to choose $0< \varepsilon_1 < \varepsilon_0,$ such that, if $x \in \mathbb{T}^3 \setminus V_{n_0}$ then

$$g_n^k(x) \in B(p_n, \varepsilon_1) \Rightarrow |k| > 1,$$

we take $V_{n_1} = B(p_n, \varepsilon_1). $ By proposition \ref{franks} we have

$$g_n^k(x) \in B(p_n, \varepsilon_1) \Rightarrow |k| > 1.$$

Suppose that we have defined $V_{n_0}, V_{n_1}, \ldots, V_{n_j},$ we define recursively $V_{n_{j+1}} = B(p_n, \varepsilon_{j+ 1}),$ with $0 < \varepsilon_{j+1} < \varepsilon_{j},$ such that, if $x \in \mathbb{T}^3 \setminus V_{n_j}$ then

$$g_n^k(x) \in B(p_n, \varepsilon_{j+1}) \Rightarrow |k| > j + 1,$$

and consequentely, by proposition \ref{franks}

$$g_{n_{j + 1}}^k(x) \in B(p_n, \varepsilon_{j+1}) \Rightarrow |k| > j + 1.$$

In the construction we require that $0<diam(V_{n_j})< \frac{1}{j + 1}$ and clearly the diameters $diam(V_{n_j}) \rightarrow 0,$ when $j \rightarrow + \infty.$

\subsection{The diffeomorphisms $g_{n_j}$ remains partially hyperbolic}

\begin{lemma}\label{partially}
For $n$ large, the diffeomorphisms  $g_{n_j}$ are partially hyperbolic for any $j \geq 0.$
\end{lemma}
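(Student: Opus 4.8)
The plan is to recognize each $g_{n_j}$ as a $C^1$-perturbation of the linear automorphism $B_n$ whose size is uniform in $j$ and tends to $0$ as $n\to\infty$, and then to use that $B_n$ is (absolutely) partially hyperbolic with a $C^1$-robustness radius that does not degenerate as $n\to\infty$, since every spectral gap of $B_n$ is of order $n$.

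\emph{Step 1: the $g_{n_j}$ are $C^1$-close to $B_n$, uniformly in $j$.} By construction $d_{C^1}(g_n,B_n)<\alpha_n$. The map $g_{n_j}$ is produced from $g_n$ by the Conservative Franks Lemma (proposition \ref{franks}) at the finite invariant set $\{p_n\}$, prescribing the derivative $B:=Dg_{n_j}(p_n)$; the quantitative point, visible in the proof of that lemma, is that $d_{C^1}(g_{n_j},g_n)=O(||B-Dg_n(p_n)||)$ \emph{independently of the ball} $V_{n_j}$ -- the factor $1/\diam(V_{n_j})$ produced by the cut-off is cancelled by the size $||B-Dg_n(p_n)||\cdot\diam(V_{n_j})$ of the correction term. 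By items (3)--(6) of the construction, $B$ and $Dg_n(p_n)$ share eigenspaces, orientation and unstable eigenvalue, and differ only in the center norm ($1-\alpha_n$ versus $||Dg_n|E^c_{g_n}(p_n)||$) and, to keep the Jacobian equal to $1$, in the stable norm (rescaled by the factor $||Dg_n|E^c_{g_n}(p_n)||/(1-\alpha_n)$). Since $g_n$ is $\alpha_n$-$C^1$-close to $B_n$ -- whose center eigenvalue is $\beta^c(n)=1+\alpha_n$ -- and the domination $\beta^u(n)/\beta^c(n)\to\infty$, $\beta^c(n)/\beta^s(n)\to\infty$ is strong, a standard graph-transform estimate gives $||Dg_n|E^c_{g_n}(p_n)||=1+O(\alpha_n)$; moreover the three eigenspaces are $O(1/n)$-close to the coordinate axes by (\ref{unstable1})--(\ref{stable1}), hence uniformly transverse. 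Consequently $||B-Dg_n(p_n)||=O(\alpha_n)$, and therefore $d_{C^1}(g_{n_j},B_n)=O(\alpha_n)$ with a constant independent of $j$.

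\emph{Step 2: $B_n$ is robustly absolutely partially hyperbolic, uniformly in $n$.} For $n$ large, $\beta^s(n)<\tfrac12<1<\tfrac n2<\beta^u(n)$ and $\beta^s(n)<\beta^c(n)<\beta^u(n)$ by (\ref{unstable})--(\ref{stable}), so $B_n$ satisfies the absolute partial hyperbolicity inequalities with separations that grow with $n$. Fix cone fields of a small \emph{fixed} angular width around the coordinate axes and the relevant coordinate $2$-planes; for $n$ large these contain $E^s_n,E^c_n,E^u_n$ and their sums by (\ref{unstable1})--(\ref{stable1}), and the cone conditions for $B_n$ hold robustly with a $C^1$-radius $\delta_0>0$ that does not shrink as $n\to\infty$ (it depends on the cone widths and the domination ratios -- which only improve -- and not on the absolute value $\beta^c(n)\to1$). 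Hence for all large $n$ any $g$ with $d_{C^1}(g,B_n)<\delta_0$ preserves these cones, so it carries a splitting $E^s_g\oplus E^c_g\oplus E^u_g$ subordinate to them whose expansion/contraction rates are within $O(\delta_0)$ of $\beta^s(n),\beta^c(n),\beta^u(n)$; for $n$ large this forces $\sup_x||Dg|E^s_g||<1<\inf_x||Dg|E^u_g||$, $\sup_x||Dg|E^s_g||<\inf_x||Dg|E^c_g||$ and $\sup_x||Dg|E^c_g||<\inf_x||Dg|E^u_g||$, i.e.\ $g$ is absolutely partially hyperbolic.

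\emph{Conclusion; main difficulty.} By Step 1, $d_{C^1}(g_{n_j},B_n)=O(\alpha_n)\to0$, so for $n$ large and every $j\ge0$ one has $d_{C^1}(g_{n_j},B_n)<\delta_0$, and Step 2 makes $g_{n_j}$ absolutely partially hyperbolic; this is the assertion. The delicate points are exactly the two uniformity statements: in Step 2, that the robustness radius $\delta_0$ of $B_n$ stays bounded below as $n\to\infty$; and in Step 1, that the Franks perturbation from $g_n$ to $g_{n_j}$ can be made $C^1$-small uniformly over the nested shrinking balls $V_{n_j}$, its size being governed by the prescribed derivative change at $p_n$ and not by $\diam(V_{n_j})$. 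Both follow from the single structural fact that every spectral gap of $B_n$ is of order $n$, so that cone fields of a fixed width around the coordinate axes suffice for all large $n$.
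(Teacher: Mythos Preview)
Your proposal is correct and follows essentially the same strategy as the paper: both argue that each $g_{n_j}$ is $O(\alpha_n)$--$C^1$--close to the linear map $B_n$ uniformly in $j$, and then exploit the large spectral gaps of $B_n$ (of order $n$) to run a cone-field argument that yields absolute partial hyperbolicity for all sufficiently small perturbations. The paper carries out the cone construction explicitly around the eigenspaces $E^s_n,E^c_n,E^u_n$, whereas you phrase the same mechanism abstractly as a uniform $C^1$--robustness radius $\delta_0$; your version is in fact a bit more careful on the one point the paper leaves implicit, namely that the Franks-lemma perturbation from $g_n$ to $g_{n_j}$ has $C^1$--size controlled by $\|B-Dg_n(p_n)\|=O(\alpha_n)$ independently of the shrinking balls $V_{n_j}$.
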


\begin{proof}

Each $g_{n_j}$ are $\varepsilon_n : = 100 \alpha_n $ close to $B_n,$ in the $C^1$ topology. In particular the matrices $Dg_{n_j}(x)$ and $B_n$ are $\varepsilon_n$ close, it implies that, there is a constant $C > 0$ such that the corresponding terms of the matrices $B_n$ and $g_{n_j}(x)$ are
$C \varepsilon_n$ close, for any $x \in \mathbb{T}^3$. Without loss generality, we go to consider $C = 1. $

Since $Dg_{n_j}(x)$ are $\varepsilon_n$ close to $B_n,$
 we have:

 \begin{enumerate}

 \item The restriction $B_n| E^u_n$ expands by a uniform constant bigger than $\frac{n}{2}, $ then $Dg_{n_j}(x)| E^u_n$ expands by a constant bigger than $\frac{n}{2}.$
 \item The restriction $B_n|E^c_n$ are close to the identity, then $Dg_{n_j}(x)| E^c_n$ are close to the identity.

 \item The restriction $B_n|E^s_n$ contracts by a uniform constant less that $\frac{2}{n},$ it implies that  $Dg_{n_j}(x)| E^s_n$ contracts by a uniform constant less than $\frac{2}{n} + \varepsilon_n.$

 \end{enumerate}

The items above implies that (from canonical Jordan form) there are subspaces  $F^u_{n_j}(x)$ and $F^s_{n_j}(x)$ invariant for $Dg_{n_j}(x),$ such that
 
$$ Dg_{n_j}(x)|F^u_{n_j}(x) \; \mbox{is uniform expanding with constant bigger than} \;\frac{n}{2}$$

$$ Dg_{n_j}(x)|F^s_{n_j}(x) \; \mbox{is uniform contracting with constant less than} \;\frac{2}{n} + \varepsilon_n,$$

Let $w^u_{n_j}(x) \in F^u_{n_j}(x) \setminus\{0\}$ and $w^s_{n_j}(x) \in F^u_{n_j}(x) \setminus\{0\},$ be unit vectors. Also, for each  $n,$ consider $\{e^u_n, e^c_n, e^s_n\}$ unitary eigenvectors of $B_n.$

Since $Dg_{n_j}(x)$ contracts uniformly $w^s_{n_j}(x)$ by a constant less than $\frac{2}{n} + \varepsilon_n,$ then the projection of $w^s_{n_j}(x)$ on $E^u_n$ has size of the order of $\frac{1}{n} \cdot (\frac{2}{n} + \varepsilon_n).$  A similar argument allows to claim that the projection of $w^s_{n_j}(x)$ on $E^c_n$ has size of the order of $ (\frac{2}{n} + \varepsilon_n).$ In particular when $n \rightarrow + \infty $ we have

\begin{equation}
\angle (E^s_n, F^s_{n_j}(x)) \rightarrow 0. \label{angleS}
\end{equation}

Analogously, taking the inverses we have

\begin{equation}
\angle (E^u_n, F^u_{n_j}(x)) \rightarrow 0. \label{angleU}
\end{equation}

Since $E^u_n , E^s_n$ are one dimensional subspaces, by the equations $(\ref{angleS})$ and $(\ref{angleU})$ we conclude that $\dim F^u_{n_j}(x) = \dim F^s_{n_j}(x) = 1 $ for every $x \in \mathbb{T}^3,$ any $j \geq 1$ when $n$ is enough large.

Fixed a constant $0 < \theta < 1$ and denote by $P^{\sigma} $ the projection of a vector on $E^{\sigma}_n, \sigma \in \{s,c,u\},$ we have

$$||P^u(Dg_{n_j} (x) \cdot e^u_n)|| \geq \frac{n}{2},  $$

for $n$ large it comes from $(\ref{angleU}),$ moreover

$$||P^s(Dg_{n_j} (x) \cdot e^s_n)|| < 1 \; \mbox{and} \; ||P^c(Dg_{n_j} (x) \cdot e^c_n)|| < 2,   $$

it is true by $(\ref{angleS})$ and the fact $Dg_{n_j}(x) | E^c_n$ is close to the identity.

Then  for $n$ large the cone $C^u_{n_j}(x, \theta) =\{ a\cdot e^u_n + b\cdot e^c_n + k \cdot e^s_n \in T_x \mathbb{T}^3 | \;\; |b| + |k| \leq \theta \cdot |a|\}$ is sent in $C^u_{n_j}(g_{n_j}(x), \theta), $ by $Dg_{n_j}(x),$ it means

$$Dg_{n_j}(x) \cdot C^u_{n_j}(x, \theta) \subset  C^u_{n_j}(g_{n_j}(x), \theta).$$

Analogously the inverse $[Dg_{n_j}(x)]^{-1}$ sent $C^s_{n_j}(g_{n_j}(x), \theta) =\{ a\cdot e^u_n + b\cdot e^c_n + k \cdot e^s_n \in T_x \mathbb{T}^3 | \;\; |b| + |a| \leq \theta \cdot |k|\}$ in $C^{s}_{n_j} (x, \theta).$

Define

$$C^{cu}_{n_j} (x, \theta) = \{ a\cdot e^u_n + b\cdot e^c_n + k \cdot e^s_n \in T_x \mathbb{T}^3 | \;\; |k| \leq \theta \cdot (|a| + |b|) \} $$

$$ C^{cs}_{n_j} (x, \theta) = \{ a\cdot e^u_n + b\cdot e^c_n + k \cdot e^s_n \in T_x \mathbb{T}^3 | \;\; |a| \leq \theta \cdot (|b| + |k|) \} .$$

A similar argument to the one above allows us conclude that

$$ Dg_{n_j}(x)\cdot C^{cu}_{n_j} (x, \theta) \subset C^{cu}_{n_j} (g_{n_j}(x), \theta) $$

and

$$ [Dg_{n_j}(x)]^{-1}\cdot C^{cs}_{n_j} (g_{n_j}(x), \theta) \subset C^{cs}_{n_j} (x, \theta), $$

thus using the characterizations of partial hyperbolicity by cones (see \cite{HaPe}), $g_{n_j}$ is partially hyperbolic for any $j$ when $n$ is arbitrarily large.

\end{proof}

\begin{remark} Note that $g_{n_j}$ is a partially hyperbolic diffeomorphims, but it is not an Anosov diffeomorphism. If fact, the dimension of the local stable manifold of $g_{n_j}$ on $p_n$ is equal two, on the other hand, every periodic point of its linearization $B_n$ has one dimensional stable maninfold. If $g_{n_j}$ was Anosov, then $B_n$ and $g_{n_j}$ would be conjugated one each other, and consequently the dimension of the local stable manifolds of periodic points would must to coincide.
\end{remark}

\subsection{Convergences }

Fixed $n$ large for each $j \geq 0$ and $x \in \mathbb{T}^3 \setminus V_{n_j}$ define

$$N_{j+1}(x) = \min\{ |k| \; | \; g_{n_{j+1}}^k(x) V_{n_{j+1}}\},$$

like the subsection \ref{neigh} we have $N_{j+1}(x) > j+1,$ for every $x \in \mathbb{T}^3 \setminus V_{n_j},$ by the construction of invariant directions using cones, we have:

$$ E^{\sigma}_{g_{n_j}}(x) \rightarrow  E^{\sigma}_{g_{n}}(x), \sigma \in \{s,c,u\}, $$

since $g_n = g_{n_j}$ out of $V_{n_j},$ $diam(V_{n_j}) \rightarrow 0$ and $N_j(x) \rightarrow +\infty.$

By dominated convergence, we have

\begin{equation}
\int_{\mathbb{T}^3 \setminus V_{n_j}} \log(|| Dg_{n_{j+1}}| E^c_{g_{n_{j+1}}}||)dm \rightarrow \int_{\mathbb{T}^3}\log(|| Dg_{n}(x) || ) dm > \lambda^c_n > 0. \label{integral1}
\end{equation}

Since $n$ is fixed $||Dg_{n_j}(x)| E^c_{g_{n_j}} ||$ is bounded, as the diameter $diam(V_{n_j}) \rightarrow 0,$ we have:

\begin{equation}
\int_{V_{n_j}} \log(|| Dg_{n_{j+1}}| E^c_{g_{n_{j+1}}}||) dm \rightarrow 0 \label{integral2}.
\end{equation}

Jointing the equations $(\ref{integral1})$ and $(\ref{integral2})$ we have

\begin{equation}
\int_{\mathbb{T}^3 } \log(|| Dg_{n_{j+1}}| E^c_{g_{n_{j+1}}}||)dm \rightarrow \int_{\mathbb{T}^3}\log(|| Dg_{n}(x) || ) dm > \lambda^c_n > 0. \label{integral3}
\end{equation}

Since $g_{n_j}$ and $B_n$ are $100 \alpha_n $ close in the $C^1$ topology, we have $g_{n_j}$ is homotopic to $B_n$ for every $j$ when $n$ is large enough such that $100 \alpha_n < \frac{1}{5}$ and $g_{n_j}$ being partially hyperbolic for every $j$ as in the lemma \ref{partially}.

\subsection{Conclusion of the proof of Theorem B}

Take a volume preserving partially hyperbolic diffeomorphism $g_{n_j}$ satisfying

$$\int_{\mathbb{T}^3 } \log(|| Dg_{n_{j+1}}| E^c_{g_{n_{j+1}}}||)dm  > \lambda^c_n > 0, $$

it is possible by the expression $(\ref{integral3}).$

If $g_{n_j} \in \partial (\overline{\mathcal{A}}(\mathbb{T}^3)),$ perturb $g_{n_j}$ to a $f_n \in DA_m(\mathbb{T}^3) \setminus \overline{\mathcal{A}}(\mathbb{T}^3)$ a stably ergodic partially diffeomorphism (it is possible by results in \cite{HHU}), such that $\lambda^c_{f_n} > \lambda^c_n > 0.$

Now consider $U \subset DA_m(\mathbb{T}^3) \setminus \overline{\mathcal{A}}(\mathbb{T}^3) $ a small neighborhood of $f_n,$ if $U$ is take a suitable neighborhood, then $\lambda^c_f > \lambda^c_n > 0$ with  $f$ homotopic to $B_n$ for every $f \in U.$

When $g_{n_j} \in  DA_m(\mathbb{T}^3) \setminus \overline{\mathcal{A}}(\mathbb{T}^3), $ we can apply the same argument above, in the case that  $g_{n_j}$ is not stably ergodic.


\end{document}